\newtheorem{thm}{Theorem}[section]
\newtheorem{lemma}[thm]{Lemma}
\newtheorem{prop}[thm]{Proposition}
\newtheorem*{conj*}{Conjecture}
\theoremstyle{definition}
\newtheorem{remark}[thm]{Remark}
\newtheorem{exam}[thm]{Example}
\numberwithin{equation}{section}
\renewcommand{\gcd}{\mathsf{gcd}}
\newcommand{\FF}{\mathbb{F}}  
\newcommand{\ZZ}{\mathbb{Z}}
\newcommand{\A}{\mathsf{A}}
\newcommand{\V}[2]{\mathsf{V\,}^{#1}_{#2}} %representation {m,n} {f}
\newcommand{\W}[2]{\mathsf{W}^{#1}_{g_{#2}}} %representation W {n} {g}
\newcommand{\s}[1]{\mathsf{s}_{#1}} %function s_f
\newcommand{\qp}{\FF_q[x, y]}
\def\cent1{\mathsf{Z}(\A_1)}
\def\modd{\, \mathsf{mod} \,}
\newcounter{marg}[section]
\begin{document}  
\title{A multiparameter family of non-weight irreducible representations of the quantum plane and of the quantum Weyl algebra}
\author{Samuel A.\ Lopes\thanks{The author was partially funded by the European Regional Development Fund through the programme COMPETE and by the Portuguese Government through the FCT -- Funda\c c\~ao para a Ci\^encia e a Tecnologia under the project PEst-C/MAT/UI0144/2013.} \ and Jo\~ao N. P. Louren\c co\thanks{The author was supported by Funda\c c\~ao Calouste Gulbenkian through the undergraduate research programme \emph{Novos Talentos em Matem\'atica}.}}
%\date{\today \ at \currenttime}
\date{}
\maketitle

\vspace{-.25 truein}  
\begin{abstract} 
We construct a family of irreducible representations of the quantum plane and of the quantum Weyl algebra over an arbitrary field, assuming the deformation parameter is not a root of unity. We determine when two representations in this family are isomorphic, and when they are weight representations, in the sense of \cite{vB97}.\end{abstract}   

%\noindent {\it Keywords:} quantum plane; quantum Weyl algebra; irreducible representation.
%\\$ $
%\\{\it 2010 Mathematics Subject Classification:} Primary 16D60; 16S80; Secondary 16S85.

\maketitle

\section{Introduction}

Assume throughout that $\FF$ is a field of arbitrary characteristic, not necessarily algebraically closed, with group of units $\FF^*$. Fix $q\in\FF^*$ with $q\neq 1$. The \emph{quantum plane}  is the unital associative algebra 
\begin{equation}
\qp=\FF\{ x, y\}/(yx-qxy)
\end{equation}
with generators $x$ and $y$ subject to the relation $yx=qxy$.

Consider the operators $\tau_q$ and $\partial_q$ defined on the polynomial algebra $\FF[t]$ by
\begin{equation}
 \tau_q (p)(t)=p(qt), \quad \mbox{and} \quad \partial_q (p)(t)=\frac{p(qt)-p(t)}{qt-t}, \quad \mbox{for $p\in\FF[t]$}.
\end{equation}
Then the assignment $x\mapsto \tau_q$, $y\mapsto \partial_q$ yields a (reducible) representation $\qp\rightarrow \mathrm{End}_\FF (\FF[t])$ of $\qp$, which is faithful if and only if $q$ is not a root of unity. The operators $\tau_q$ and $\partial_q$ are central in the theory of linear $q$-difference equations and $\partial_q$ is also known as the \emph{Jackson derivative}, as it appears in \cite{fJ10}. See e.g.\ \cite{yM88}, \cite[Chap.\ IV]{cK95} and references therein for further details.

The irreducible representations of the quantum plane $\qp$ have been classified in~\cite{vB97} using results from~\cite{BvO97}. Following \cite{vB97} we say that a representation of $\qp$ is a \emph{weight representation} if it is semisimple as a representation of the polynomial subalgebra $\FF[H]$ generated by the element $H=xy$. When $q$ is a root of unity all irreducible representations of $\qp$ are finite-dimensional weight representations, and these are well understood. For example, if $\FF$ is algebraically closed and $q$ is a primitive $n$-th root of unity then the irreducible representations of $\qp$ are either $1$ or $n$ dimensional. When $q$ is not a root of unity there are irreducible representations of $\qp$ that are not weight representations, and in particular are not finite dimensional. These turn out to be the \emph{$\FF[H]$-torsionfree} irreducible representations of $\qp$, as they remain irreducible (i.e. nonzero) upon localizing at the nonzero elements of $\FF[H]$. In~\cite[Cor.\ 3.3]{vB97} the torsionfree representations of $\qp$ are classified in terms of elements satisfying certain conditions, but no explicit construction of these representations is given.

We assume $q$ is not a root of unity, and we give an explicit construction of a 3-parameter family $\V{m, n}{f}$ of infinite-dimensional representations of $\qp$ having the following properties (compare Propositions \ref{P:isoclass}, \ref{P:dec} and \ref{P:weight}):
\begin{itemize}
\item $m$ and $n$ are positive integers, and $f:\mathbb{Z} \rightarrow \FF^{*}$ satisfies condition \eqref{prop} below, which essentially encodes $n$ independent parameters from $\FF^*$;
\item $\V{m, n}{f}$ is irreducible if and only if $\gcd(m, n)=1$;
\item if $(m, n)\neq (m', n')$ then $\V{m, n}{f}$ and $\V{m', n'}{f'}$ are not isomorphic;
\item $\V{m, n}{f}$ is a weight representation if and only if $m=n$;
\item if $\FF$ is algebraically closed and $V$ is an irreducible weight representation of $\qp$ that is infinite dimensional, then $V\simeq\V{1, 1}{f}$ for some $f:\mathbb{Z} \rightarrow \FF^{*}$.
\end{itemize}
Thus, in some sense weight and non-weight representations of $\qp$ are rejoined in the family $\V{m, n}{f}$.

The localization of $\qp$ at the multiplicative set generated by $x$ contains a copy of the \emph{$q$-Weyl algebra}, which is the algebra 
\begin{equation}\label{E:qwa}
\mathbb{A}_1(q)=\FF\{ X, Y\}/(YX-qXY-1)
\end{equation}
with generators $X$ and $Y$ subject to the relation $YX-qXY=1$ (see~\eqref{E:qwainqp} for details about this embedding). This is used in Subsection~\ref{SS:qwa} to regard the representations $\V{m, n}{f}$ as infinite-dimensional irreducible representations of $\mathbb{A}_1(q)$. In contrast with the action of $\qp$ on $\V{m, n}{f}$ when $m=n$, it turns out that $\V{m, n}{f}$ is never a weight representation of $\mathbb{A}_1(q)$ in the sense of~\cite{vB97}. In Subsection~\ref{SS:restriction} we pursue a dual approach by constructing representations $\W{n}{}$ of $\mathbb{A}_1(q)$ and then restricting the action from the $q$-Weyl algebra to two distinct subalgebras of $\mathbb{A}_1(q)$ isomorphic to $\qp$.

\section{A family $\V{m, n}{f}$ of infinite-dimensional irreducible representations of $\qp$ for $q$ not a root of unity}\label{S:qnru}

Assume $q\in\FF^*$ is not a root of unity. We introduce a family $\V{m, n}{f}$ of infinite-dimensional representations of $\qp$ which are not in general weight representations in the sense of~\cite{vB97}, but which includes all irreducible infinite-dimensional weight representations of $\qp$ if we further assume $\FF$ to be algebraically closed.

\subsection{Structure of the representations $\V{m, n}{f}$}
 
Fix positive integers $m, n\in\ZZ_{>0}$ and a function $f:\mathbb{Z} \rightarrow \FF^{*}$ satisfying 
\begin{equation}\label{prop}
f(i+n)=qf(i), \quad \quad \text{for all $i\in\ZZ$.}
\end{equation}
Such functions are in one-to-one correspondence with elements of $\left(\FF^*\right)^n$. Let $\V{m, n}{f}$ denote the representation of $\qp$ on the space $\FF[t^{\pm 1}]$ of Laurent polynomials in $t$ given by 
\begin{equation}\label{action}
x.t^{i}=t^{i+n}, \quad \quad y.t^{i}=f(i)t^{i-m},\quad \quad \text{for all $i\in\ZZ$.}
\end{equation}
Condition~\eqref{prop} ensures that the expressions~\eqref{action} do define an action of $\qp$ on $\FF[t^{\pm 1}]$ as, for all $i\in\ZZ$,
\begin{equation*}
(yx-qxy).t^{i}=(f(i+n)-qf(i))t^{i+n-m}=0.
\end{equation*}

\begin{exam}\label{Ex:floor}
Fix $\mu\in\FF^{*}$ and $m, n\in\ZZ_{>0}$. For $i\in\ZZ$ let $f(i)=\mu q^{\left\lfloor \frac{i}{n}\right\rfloor}$, where $\left\lfloor \frac{i}{n}\right\rfloor$ denotes the largest integer not exceeding $\frac{i}{n}$. Then $f:\mathbb{Z} \rightarrow \FF^{*}$ satisfies condition~\eqref{prop} and thus there is a representation $\V{m, n}{f}$ of $\qp$ on $\FF[t^{\pm 1}]$ with action 
\begin{equation*}
x.t^{i}=t^{i+n}, \quad \quad y.t^{i}=\mu q^{\left\lfloor \frac{i}{n}\right\rfloor}t^{i-m},\quad \quad \text{for all $i\in\ZZ$.}
\end{equation*}
\end{exam}

%Let $d=\gcd(m, n)$. It is clear from \eqref{action} that $\FF[t^{\pm d}]\subseteq\V{m, n}{f}$ is invariant under the actions of $x$ and $y$, thus if we wish to construct irreducible representations we must require $m$ and $n$ to be coprime. {\color{red}In section ?? we will describe the structure of $\V{m, n}{f}$ for general $m, n\in\ZZ_{>0}$, but first we focus on the case $\gcd(m, n)=1$.}

We begin the study of the representations $\V{m, n}{f}$ by first considering the case that the parameters $m$ and $n$ are coprime. The following consequence of~\eqref{prop} will be helpful.

%In view of the previous result, we henceforth assume that the positive integers $m$ and $n$ are coprime. 

\begin{lemma}\label{L:NT}
Assume  $\gcd(m, n)=1$ and $f:\mathbb{Z} \rightarrow \FF^{*}$ satisfies~\eqref{prop}. For $k\in\ZZ$ define 
\begin{equation}
\s{f}(k)=\prod_{i=0}^{n-1} f(k-im).
\end{equation}
Then $\s{f}(k)=\s{f}(0)q^{k}$.
\end{lemma}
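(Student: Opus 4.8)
The plan is to reduce the statement to the claim that consecutive ratios $\s{f}(k)/\s{f}(k-1)$ are all equal to $q$; once that is established, an immediate induction on $k\geq 0$ and on $-k\geq 0$ gives $\s{f}(k)=q^k\,\s{f}(0)$ for every $k\in\ZZ$. So the real work is to compute $\s{f}(k)/\s{f}(k-1)$.

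First I would introduce the auxiliary function $g\colon\ZZ\to\FF^*$ defined by $g(j)=f(j)/f(j-1)$, which makes sense because $f$ takes values in $\FF^*$. Condition \eqref{prop} immediately gives $g(j+n)=f(j+n)/f(j+n-1)=qf(j)/(qf(j-1))=g(j)$, so $g$ is periodic of period $n$; and the product of $g$ over one full period telescopes,
\[
\prod_{j=0}^{n-1}g(j)=\frac{f(n-1)f(n-2)\cdots f(0)}{f(n-2)f(n-3)\cdots f(-1)}=\frac{f(n-1)}{f(-1)}=q,
\]
the last equality by \eqref{prop} with $i=-1$. Then I would rewrite
\[
\frac{\s{f}(k)}{\s{f}(k-1)}=\prod_{i=0}^{n-1}\frac{f(k-im)}{f(k-1-im)}=\prod_{i=0}^{n-1}g(k-im),
\]
and this is exactly where coprimality is used: since $\gcd(m,n)=1$, multiplication by $m$ is a bijection of $\ZZ/n\ZZ$, so the integers $k, k-m, \dots, k-(n-1)m$ form a complete residue system modulo $n$; as $g$ depends only on its argument mod $n$, the displayed product equals $\prod_{j=0}^{n-1}g(j)=q$. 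Hence $\s{f}(k)=q\,\s{f}(k-1)$ for all $k$, and the induction mentioned above finishes the proof.

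I do not anticipate a serious obstacle here. The one point worth getting right is the decision to compare $\s{f}(k)$ with $\s{f}(k-1)$ rather than with $\s{f}(k-n)$: the latter only recovers the weaker periodic relation $\s{f}(k)=q^n\,\s{f}(k-n)$. After that choice, the two ingredients — the period‑telescoping of $\prod g$ and the complete‑residue‑system observation — are routine, and the coprimality hypothesis is used only in the second of them.
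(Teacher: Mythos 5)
Your proof is correct, and it takes a noticeably different route from the paper's, although both arguments turn on the same coprimality fact, namely that $k, k-m, \dots, k-(n-1)m$ form a complete residue system modulo $n$. The paper works with a ``closed form'': it first proves $f(j)=f(\overline{\jmath})\,q^{(j-\overline{\jmath})/n}$ (where $\overline{\jmath}$ is the residue of $j$ in $[0,n)$), substitutes this into the defining product for $\s{f}(k)$, and then carries out a slightly fiddly bookkeeping of the exponent sum $\sum_i (k-im-\overline{k-\imath m})/n$ to extract the factor $q^k$ and identify the remainder with $\s{f}(0)$. You instead work with the ``first difference'': the ratio function $g(j)=f(j)/f(j-1)$ is $n$-periodic by \eqref{prop}, its product over a period telescopes to $f(n-1)/f(-1)=q$, and the complete-residue-system observation converts $\s{f}(k)/\s{f}(k-1)$ into exactly that product, after which a two-sided induction finishes. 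Your version buys a cleaner argument --- the telescoping replaces the exponent-sum manipulation, and there is no need to verify the auxiliary formula for $f(j)$ by induction --- at the modest cost of an extra induction on $k$ at the end; the paper's version produces the identity for all $k$ in one computation. Both are complete and correct.
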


\begin{proof}
For $j\in\ZZ$ let $0\leq \overline{\jmath}<n$ be the unique integer such that  $\overline{\jmath}\equiv j \modd n$. Then the formula $f(j)=f(\overline{\jmath}) q^{\frac{j-\overline{\jmath}}{n}}$ can be verified by induction on $\left| \frac{j-\overline{\jmath}}{n} \right|$. Thus,
\begin{equation*}
 \s{f}(k)=\prod_{i=0}^{n-1} f(k-im)=\prod_{i=0}^{n-1} f\left(\overline{k-\imath m}\right) \prod_{i=0}^{n-1}q^{\frac{k-im-\overline{k-\imath m}}{n}}. 
\end{equation*}
Since $m$ and $n$ are coprime,  the set $\left\{ \overline{k-\imath m} \mid 0\leq i<n \right\}$  consists of all the integers from $0$ to $n-1$, and is thus independent of $k$. Moreover,
\begin{equation*}
\sum_{i=0}^{n-1}{\frac{k-im-\overline{k-\imath m}}{n}} = k+ \sum_{i=0}^{n-1}{\frac{-im-\overline{k-\imath m}}{n}} = k+ \sum_{i=0}^{n-1}{\frac{-im-\overline{(-\imath m)}}{n}}.
\end{equation*}
Hence,
\begin{equation*}
 \s{f}(k) =q^{k}\prod_{i=0}^{n-1} f\left(\overline{-\imath m}\right) \prod_{i=0}^{n-1}q^{\frac{-im-\overline{(-\imath m)}}{n}}=q^{k}\s{f}(0).
\end{equation*}
\end{proof}

\begin{prop}\label{P:irred}
 Assume  $\gcd(m, n)=1$ and $f:\mathbb{Z} \rightarrow \FF^{*}$ satisfies~\eqref{prop}. Then the representation $\V{m, n}{f}$ defined by~\eqref{action} is an irreducible representation of $\qp$.
\end{prop}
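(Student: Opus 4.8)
The plan is to show that any nonzero submodule $W\subseteq\V{m,n}{f}$ must be all of $\FF[t^{\pm1}]$. The key structural observation is that the action preserves the $\ZZ/\gcd(m,n)\ZZ$-grading in a way that, when $\gcd(m,n)=1$, forces orbits of monomials under $x$ and $y$ to eventually hit every power of $t$. Concretely, since $x.t^i=t^{i+n}$ and $y.t^i=f(i)t^{i-m}$ with $f(i)\in\FF^*$, applying words in $x$ and $y$ to a single monomial $t^i$ produces (up to nonzero scalars) exactly the monomials $t^{i+an-bm}$ for $a,b\in\ZZ_{\geq0}$; because $\gcd(m,n)=1$, the set $\{an-bm : a,b\geq 0\}$ is all of $\ZZ$, so from any one monomial we can reach every monomial. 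The remaining issue is therefore purely linear-algebraic: upgrade ``$W$ contains a nonzero element'' to ``$W$ contains a monomial.''

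First I would take a nonzero $w\in W$ and write $w=\sum_{i\in S}c_i t^i$ with $S$ finite, $c_i\in\FF^*$, and argue by induction on $|S|$ that $W$ contains some monomial $t^j$. If $|S|=1$ we are done. If $|S|\geq 2$, pick the largest exponent $i_0\in S$; then $x.w$ has top term $c_{i_0}t^{i_0+n}$ and $y.w$ has bottom term $c_{i_1}f(i_1)t^{i_1-m}$ where $i_1=\min S$. The standard trick is to compute $xy.w$ and $yx.w$: on the monomial $t^i$ these act as $xy.t^i=f(i)t^i$ and $yx.t^i=f(i+n)t^i=qf(i)t^i$ by \eqref{prop}, so $H=xy$ acts diagonally with eigenvalue $f(i)$ on $t^i$. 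Hence if the scalars $f(i)$, $i\in S$, were pairwise distinct, a Vandermonde/partial-fraction argument applied to the polynomial algebra $\FF[H]$ acting on $w$ would immediately isolate each $t^i$, giving $t^i\in W$. In general the $f(i)$ need not be distinct (e.g. $f$ could be constant when $n\mid$ differences and $q=1$ is excluded but still $f$ may repeat values on $S$), so instead I would use a ``shift and subtract'' argument: replace $w$ by $x.w - (c_{i_0}/c'_{i_0'})\,y^{?}\cdots$ — more cleanly, note $x$ is injective and shifts support by $n$, so among the finitely many exponents in $S$ one can choose an operator that strictly decreases $|S|$ while staying nonzero, because applying $x$ then subtracting a scalar multiple of $w$ itself shifted is impossible (different supports), but applying a suitable polynomial in $H$ that vanishes on all but one value of $f$ on $S$ does reduce the support.

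Here is the main obstacle and how I would get around it: the eigenvalues $f(i)$ for $i\in S$ may coincide, so $\FF[H]$ alone cannot separate the monomials. The fix is to combine the $H$-action with the shift operators $x,y$: since $\gcd(m,n)=1$, the map $i\mapsto (i\bmod n)$ restricted to any long enough arithmetic-type progression is injective, and more importantly Lemma \ref{L:NT} tells us that $\s{f}(k)=\prod_{i=0}^{n-1}f(k-im)=\s{f}(0)q^k$ depends genuinely on $k$ (as $q$ is not a root of unity, $k\mapsto q^k$ is injective). Concretely, consider the operator $y^n$: it sends $t^i\mapsto\big(\prod_{j=0}^{n-1}f(i-jm)\big)t^{i-nm}=\s{f}(i)\,t^{i-nm}=\s{f}(0)q^i\,t^{i-nm}$, and likewise $x^n.t^i=t^{i+n^2}$ scaled trivially. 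Thus the operator $x^n y^n$ acts on $t^i$ by the scalar $\s{f}(0)q^i$, and since $q$ is not a root of unity these scalars are \emph{pairwise distinct} for distinct $i$. Now $\FF[x^ny^n]$ acts on $w=\sum_{i\in S}c_it^i$ by a genuinely diagonalizable operator with distinct eigenvalues on the relevant monomials, so a Lagrange-interpolation polynomial in $x^ny^n$ projects $w$ onto any single $t^{i}$, $i\in S$; hence $t^i\in W$. Once one monomial lies in $W$, the first paragraph's reachability argument (using $\gcd(m,n)=1$) shows $t^j\in W$ for all $j\in\ZZ$, so $W=\FF[t^{\pm1}]=\V{m,n}{f}$, proving irreducibility. $\qed$
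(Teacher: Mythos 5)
Your argument is essentially the paper's own proof: identify an operator in $\qp$ that acts diagonally on the monomial basis with pairwise distinct eigenvalues $\s{f}(0)q^i$ (so that a polynomial in it isolates a single monomial inside any nonzero submodule), and then use $an-bm=1$ with $a,b>0$ to propagate from one monomial to all of them. The middle paragraph's ``shift and subtract'' digression is not needed once you reach the final paragraph.

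There is one concrete slip you must fix: the diagonal operator is $x^{m}y^{n}$, not $x^{n}y^{n}$. You correctly compute $y^{n}.t^{i}=\s{f}(i)\,t^{i-nm}$, so to return to degree $i$ you must apply $x^{a}$ with $an=nm$, i.e.\ $a=m$. As written, $x^{n}y^{n}.t^{i}=\s{f}(0)q^{i}\,t^{\,i+n(n-m)}$, which is diagonal only when $m=n$; for $m\neq n$ the Lagrange-interpolation step fails for this operator because it does not preserve the eigenspaces. Replacing $x^{n}y^{n}$ by $x^{m}y^{n}$ everywhere in your last paragraph (this is exactly the operator $\theta$ used in the paper, with $x^{m}y^{n}.t^{k}=\s{f}(0)q^{k}t^{k}$ by Lemma~\ref{L:NT}) repairs the proof, and the rest goes through verbatim.
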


\begin{proof}
We begin with a computation: for $k\in\ZZ$ we have, by Lemma~\ref{L:NT},
\begin{equation}\label{E:eigen}
x^m y^n . t^k=x^m\left(\prod_{i=0}^{n-1}f(k-im)\right) t^{k-nm}=\s{f}(k) t^{k}=\s{f}(0)q^{k} t^{k}.
\end{equation}
Hence,  $x^m y^n . p(t)=\s{f}(0)p(qt)$ for all $p\in\FF[t^{\pm 1}]$.

Let $\mathsf{W}\subseteq \V{m, n}{f}$ be a nonzero subrepresentation. If $p(t)\in\mathsf{W}$ then also $p(qt)\in\mathsf{W}$, by~\eqref{E:eigen}. As $q$ is not a root of unity, the latter implies that $t^{\ell}\in\mathsf{W}$ for some $\ell\in\ZZ$. The coprimeness of $m$ and $n$ shows the existence of integers $a$ and $b$ so that $an-bm=1$. By replacing $a$ and $b$ with $a+jm$ and $b+jn$ for a sufficiently large integer $j$, we can assume $a, b\in\ZZ_{>0}$. Then $x^{a} y^{b}.t^{k}=\lambda_{k}t^{k+1}$ for some $\lambda_{k}\in\FF^{*}$, showing that $t^{k}\in\mathsf{W}$ for all $k\geq \ell$. A similar argument shows that $t^{k}\in\mathsf{W}$ for all $k\leq \ell$. Hence $\mathsf{W}=\V{m, n}{f}$, establishing the irreducibility of $\V{m, n}{f}$.
\end{proof}

Next we describe $\V{m, n}{f}$ in terms of a maximal left ideal of $\qp$. Recall that for a representation $\mathsf{V}$ of $\qp$ and an element $v\in\mathsf{V}$, the annihilator of $v$ in $\qp$ is $\mathsf{ann}_{\qp}(v)=\{ r\in\qp \mid r.v=0 \}$, a left ideal of $\qp$.

\begin{prop}\label{P:isoclass}
Assume  $\gcd(m, n)=1$ and $f:\mathbb{Z} \rightarrow \FF^{*}$ satisfies~\eqref{prop}. 
\begin{enumerate}
\item[\textup{(a)}] For $1\in\V{m, n}{f}$, $\mathsf{ann}_{\qp}(1)=\qp\left(x^{m}y^{n}-\s{f}(0)\right)$ and $$\V{m, n}{f}\simeq \qp/\qp\left(x^{m}y^{n}-\s{f}(0)\right).$$
\item[\textup{(b)}] For positive integers $m', n'$, and $f':\mathbb{Z} \rightarrow \FF^{*}$ satisfying \eqref{prop} (with $n$ replaced by $n'$), 
we have $\V{m, n}{f}\simeq \V{m', n'}{f'}$ if and only if $m=m'$, $n=n'$ and $\s{f'}(0)=q^{k}\s{f}(0)$ for some $k\in\ZZ$.
\end{enumerate}
\end{prop}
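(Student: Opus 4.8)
The plan is to prove the two parts in sequence, using part (a) as the main tool for part (b).

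For part (a), I would first establish the inclusion $\qp(x^m y^n - \s{f}(0)) \subseteq \ann_{\qp}(1)$, which is immediate from \eqref{E:eigen} with $k=0$, namely $x^m y^n.1 = \s{f}(0)$. For the reverse inclusion I would use the monomial basis $\{x^i y^j \mid i,j \geq 0\}$ of $\qp$ together with the relation $yx = qxy$ to put any $r \in \qp$ into the normal form $r = \sum c_{ij} x^i y^j$. The key observation is that $x^i y^j . t^0 = (\text{scalar}) \cdot t^{i-j}$, with a \emph{nonzero} scalar precisely when $j=0$ or the relevant values of $f$ do not vanish — but since $f$ takes values in $\FF^*$, the scalar $f(-j)f(-j+1)\cdots$ entering $y^j.1$ is always nonzero, so $x^i y^j.1$ is a nonzero multiple of $t^{i-j}$ for all $i,j\geq 0$. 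Then grouping the monomials in $r$ by the value of $i-j$ and using that $\{t^\ell\}$ is a basis, $r.1 = 0$ forces, within each fixed difference $i-j = \ell$, a linear dependence; reducing modulo $x^m y^n - \s{f}(0)$ (which lets one trade $x^m y^n$ for a scalar, hence lower the bidegree) one shows every element of $\ann_{\qp}(1)$ lies in $\qp(x^m y^n - \s{f}(0))$. An alternative cleaner route: show $\qp/\qp(x^m y^n - \s{f}(0))$ has $\FF$-basis the images of $x^a y^b$ with $0 \leq a < m$ or $0 \leq b < n$ suitably indexed so that it surjects onto $\V{m,n}{f}$ (which has countable dimension with basis $\{t^\ell\}_{\ell\in\ZZ}$), and since $\V{m,n}{f}$ is irreducible by Proposition~\ref{P:irred} the surjection $\qp/\qp(x^m y^n - \s{f}(0)) \twoheadrightarrow \V{m,n}{f}$ induced by $1 \mapsto 1$ must be an isomorphism once one checks the quotient is itself irreducible (equivalently that the two dimensions match). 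I would go with the first, more computational route as it is self-contained.

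For part (b), the ``if'' direction is easy: if $m=m'$, $n=n'$ and $\s{f'}(0) = q^k \s{f}(0)$, then by part (a) both modules are cyclic of the form $\qp/\qp(x^m y^n - c)$; the map $t^i \mapsto t^{i-k}$ (equivalently, rescaling the cyclic generator) intertwines the two actions since \eqref{E:eigen} shows $x^m y^n$ acts as $\s{f}(0)q^i$ on $t^i$ in $\V{m,n}{f}$ and as $\s{f'}(0)q^i = \s{f}(0)q^{i+k}$ on $t^i$ in $\V{m,n}{f'}$. For the ``only if'' direction, suppose $\phi: \V{m,n}{f} \xrightarrow{\sim} \V{m',n'}{f'}$. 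The element $x$ acts bijectively on each module (it sends the basis $\{t^i\}$ to $\{t^{i+n}\}$, resp.\ $\{t^{i+n'}\}$), and $y$ likewise; so one can read off $n$ and $m$ from the representation intrinsically, e.g.\ as follows: the quotient space $\V{m,n}{f}/x.\V{m,n}{f}$ has dimension $n$ (it is spanned by the images of $t^0,\dots,t^{n-1}$), and this is preserved by $\phi$, giving $n = n'$; similarly $\dim \V{m,n}{f}/y.\V{m,n}{f} = m$ gives $m = m'$. With $m=m'$, $n=n'$ in hand, \eqref{E:eigen} says $x^m y^n$ acts on $\V{m,n}{f}$ semisimply with eigenvalues exactly $\{\s{f}(0) q^k \mid k \in \ZZ\}$ (each with multiplicity one, on $\ZZ$-indexed eigenlines), and likewise on $\V{m',n'}{f'}$ with eigenvalue set $\{\s{f'}(0) q^k \mid k\in\ZZ\}$; an isomorphism of modules must match these eigenvalue multisets, so $\s{f'}(0) q^{k_0} = \s{f}(0)$ for some $k_0 \in \ZZ$, i.e.\ $\s{f'}(0) = q^{-k_0}\s{f}(0)$, as desired.

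The main obstacle I expect is the reverse inclusion in part (a): verifying carefully that $\ann_{\qp}(1) \subseteq \qp(x^m y^n - \s{f}(0))$ requires a clean bookkeeping argument showing that any normal-form element killing $t^0$ can be reduced, modulo $x^m y^n - \s{f}(0)$, to zero — this is where the hypothesis $\gcd(m,n)=1$ and the precise structure of the action must be used, and it is easy to be sloppy about which monomials $x^i y^j$ contribute to a given power $t^{i-j}$. Everything else (the ``if'' direction of (b), and extracting $m,n$ from the module in the ``only if'' direction) is routine once part (a) is available.
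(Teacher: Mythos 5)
Your overall strategy for part (a) is the same as the paper's: split an annihilating element into pieces according to the power of $t$ it produces from $1$, and reduce each piece modulo $x^{m}y^{n}-\s{f}(0)$. But the exponent bookkeeping is off in a way that matters: $x^{i}y^{j}.1$ is a nonzero multiple of $t^{ni-mj}$, not of $t^{i-j}$, so the grouping must be by the value of $ni-mj$. This is precisely where $\gcd(m,n)=1$ enters: two monomials $x^{a}y^{b}$ and $x^{a'}y^{b'}$ with $na-mb=na'-mb'$ then differ by an integer multiple of $(m,n)$, i.e.\ by a power of $x^{m}y^{n}$, which is what lets you write each homogeneous piece as $x^{a}y^{b}w_0$ with $w_0\in\FF[x^{m}y^{n}]\cap\mathsf{ann}_{\qp}(1)$ and conclude. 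Grouping by $i-j$ does not yield homogeneous pieces (e.g.\ $x.1=t^{n}$ while $x^{2}y.1\in\FF t^{2n-m}$), so the step as literally written fails; the fix is exactly the replacement above.

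The serious gap is in the ``only if'' direction of (b). You propose to recover $n$ as $\dim \V{m, n}{f}/x.\V{m, n}{f}$, but, as you yourself observe, $x$ acts bijectively on $\V{m, n}{f}=\FF[t^{\pm1}]$ (it permutes the basis $\{t^{i}\}$), so $x.\V{m, n}{f}$ is the whole module and that quotient is zero --- and likewise for $y$. You are implicitly computing on $\FF[t]$ rather than on Laurent polynomials, and neither quotient detects $m$ or $n$. The paper instead extracts $(m,n)$ from diagonalizability: $x^{a}y^{b}$ sends $t^{k}$ to a nonzero scalar times $t^{k+na-mb}$, hence is diagonalizable on $\V{m, n}{f}$ if and only if $na=mb$, i.e.\ $(a,b)=\xi(m,n)$ with $\xi\geq 0$; since $x^{m'}y^{n'}$ is visibly diagonal on $\V{m', n'}{f'}$, it must be diagonalizable on the isomorphic $\V{m, n}{f}$, and symmetrically, forcing $(m,n)=(m',n')$. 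Your subsequent eigenvalue comparison giving $\s{f'}(0)=q^{k}\s{f}(0)$ is then fine. A smaller issue in the ``if'' direction: the map $t^{i}\mapsto t^{i-k}$ does not intertwine the $y$-actions unless $f(i)=f'(i-k)$ for all $i$; the clean route is to show $\mathsf{ann}_{\qp}(t^{k})=\qp\left(x^{m}y^{n}-q^{k}\s{f}(0)\right)$ and match cyclic generators, which is what the paper does.
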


\begin{proof}
(a)\ Let $\theta=x^m y^n$. First we show that 
\begin{equation}\label{E:claim_ann}
\mathsf{ann}_{\qp}(1)=\qp\left(\FF[\theta]\cap\mathsf{ann}_{\qp}(1) \right).
\end{equation}
The inclusion $\supseteq$ is clear, so suppose $u\in\mathsf{ann}_{\qp}(1)$. Write $u=\sum_{i\geq 0}\mu_i x^{a_i}y^{b_i}=\sum_{k\in\ZZ}u_k$, where $\displaystyle u_k=\sum_{na_i-mb_i=k}\mu_i x^{a_i}y^{b_i}$. Since $u_k.1$ is in $\FF t^k$, it follows that $u_k\in\mathsf{ann}_{\qp}(1)$ for all $k\in\ZZ$, and it suffices to prove $u_k\in\qp\left(\FF[\theta]\cap\mathsf{ann}_{\qp}(1) \right)$.

If $na_i-mb_i=na_j-mb_j$ then, as $\gcd(m, n)=1$, we deduce that $(a_i, b_i)=(a_j, b_j)+\xi (m, n)$ for some $\xi\in\ZZ$. Thus, by the normality of $x$ and $y$, there are  $a, b\geq 0$ with $na-mb=k$ such that $u_k=x^a y^b w_0$, where $w_0=\sum_{j\geq 0}\nu_j x^{\xi_j m}y^{\xi_j n}\in\FF[\theta]$. Notice that for any $\ell\in\ZZ$, $x^a y^b.t^\ell$ is a nonzero scalar multiple of $t^{\ell+k}$, so $x^a y^b w_0=u_k\in\mathsf{ann}_{\qp}(1)$ implies that $w_0\in\mathsf{ann}_{\qp}(1)$. Hence, $u_k\in\qp\left(\FF[\theta]\cap\mathsf{ann}_{\qp}(1) \right)$ and \eqref{E:claim_ann} is established.

Now \eqref{E:eigen} implies that $\theta-\s{f}(0)\in\FF[\theta]\cap\mathsf{ann}_{\qp}(1)$. Since $\FF[\theta]\left( \theta-\s{f}(0) \right)$ is a maximal ideal of $\FF[\theta]$ it follows that $\FF[\theta]\cap\mathsf{ann}_{\qp}(1)=\FF[\theta]\left( \theta-\s{f}(0) \right)$ and $\mathsf{ann}_{\qp}(1)=\qp\left(\theta-\s{f}(0)\right)$. This proves (a) as $1\in\V{m, n}{f}$ generates $\V{m, n}{f}$.

(b)\ We observe that the arguments above also show that for $t^k\in\V{m, n}{f}$, $\mathsf{ann}_{\qp}(t^k)=\qp\left(\theta-q^k \s{f}(0)\right)$ and 
\begin{equation*}
\V{m, n}{f}\simeq \qp/\qp\left(x^{m}y^{n}-q^k\s{f}(0)\right),
\end{equation*}
for any $k\in\ZZ$. This establishes the \textit{if} part of (b). For the direct implication, suppose $\V{m, n}{f}\simeq \V{m', n'}{f'}$. We have, for $a, b\geq 0$ and $t^k\in\V{m, n}{f}$, 
$$
x^a y^b.t^k=\left( \prod_{i=0}^{b-1}f(k-im)\right)t^{k+na-mb}
$$
and $\prod_{i=0}^{b-1}f(k-im)\neq 0$. This implies that $x^a y^b$ is diagonalizable on $\V{m, n}{f}$ if and only if $na=mb$. As $\gcd(m, n)=1$ this amounts to having $(a, b)=\xi(m, n)$ for some $\xi\geq 0$. 

Since $\V{m, n}{f}\simeq \V{m', n'}{f'}$, then $x^{m'}y^{n'}$ is diagonalizable on $\V{m, n}{f}$ and similarly $x^{m}y^{n}$ is diagonalizable on $\V{m', n'}{f'}$. By the relation above we conclude that $(m, n)=(m', n')$. Moreover, the eigenvalues of $x^{m}y^{n}$ on $\V{m, n}{f}$ are of the form $q^{k}\s{f}(0)$, whereas $\s{f'}(0)$ is an eigenvalue of $x^{m'}y^{n'}=x^{m}y^{n}$ on $\V{m', n'}{f'}$. Hence $\s{f'}(0)=q^{k}\s{f}(0)$ for some $k\in\ZZ$, which concludes the proof.
\end{proof}

\begin{remark}\label{R:f}
By  Proposition~\ref{P:isoclass} above, for $\gcd(m, n)=1$ and $f:\mathbb{Z} \rightarrow \FF^{*}$ satisfying~\eqref{prop}, the isomorphism class of $\V{m, n}{f}$ depends only on $m$, $n$ and $\s{f}(0)\in\FF^*$. 

Fix $\lambda\in\FF^*$. Since $\gcd(m, n)=1$ there is a unique $f_\lambda:\mathbb{Z} \rightarrow \FF^{*}$ such that \eqref{prop} holds and $f_\lambda(km)=\lambda$ if $k=0$ and $f_\lambda(km)=1$ if $-(n-1)\leq k\leq -1$. Then $\s{f_\lambda}(0)=\lambda$, $\V{m, n}{f_\lambda}\simeq \qp/\qp\left(x^{m}y^{n}-\lambda\right)$ and, for $\lambda'\in\FF^*$, $\V{m, n}{f_\lambda}\simeq \V{m, n}{f_{\lambda'}}$ if and only if $\lambda/\lambda'\in \langle q\rangle$, where $\langle q\rangle$ is the  subgroup of $\FF^*$ generated by $q$.

If $\FF$ contains an $n$-th root of $\lambda$, say $\mu$, there is a more natural construction for the irreducible representation $\qp/\qp\left(x^{m}y^{n}-\lambda\right)$. Define $f^{\mu}(i)=\mu q^{\left\lfloor \frac{i}{n}\right\rfloor}$, as in Example~\ref{Ex:floor}. Then $\s{f^\mu}(0)=q^k \mu^n=q^k \lambda$, for some $k\in\ZZ$. It follows from Proposition~\ref{P:isoclass} that $\V{m, n}{f^\mu}\simeq \qp/\qp\left(x^{m}y^{n}-\lambda\right)$ and $\V{m, n}{f^\mu}$ depends only on $m$, $n$ and $\lambda$, and not on the particular $n$-th root of $\lambda$ that was chosen.
\end{remark}

Finally we consider the general case of arbitrary $m, n\in\ZZ_{>0}$.

\begin{prop}\label{P:dec}
Let  $m, n\in\ZZ_{>0}$ be arbitrary, with $d=\gcd(m, n)$, and assume $f:\mathbb{Z} \rightarrow \FF^{*}$ satisfies~\eqref{prop}. Then there is a direct sum decomposition
\begin{equation}\label{dec}
\V{m, n}{f}\simeq \bigoplus_{k=0}^{d-1}\V{m/d, n/d}{f_k}
\end{equation}
into irreducible representations, where $f_k(i)=f(k+id)$, for $0\leq k<d$ and $i\in\ZZ$.

Moreover, suppose $m', n'\in\ZZ_{>0}$, and $f':\mathbb{Z} \rightarrow \FF^{*}$ satisfies~\eqref{prop} (with $n$ replaced by $n'$). If  $\V{m, n}{f}\simeq \V{m', n'}{f'}$ then $m=m'$ and $n=n'$.
\end{prop}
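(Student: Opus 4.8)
The plan is to treat the decomposition~\eqref{dec} and the rigidity statement separately, leaning on the coprime case already settled. For~\eqref{dec} I would start from the observation that, since $d=\gcd(m,n)$ divides both $m$ and $n$, the operators by which $x$ and $y$ act on $\FF[t^{\pm 1}]$, namely $t^i\mapsto t^{i+n}$ and $t^i\mapsto f(i)\,t^{i-m}$, each send a monomial $t^i$ to a scalar multiple of a monomial $t^j$ with $j\equiv i$ modulo $d$. Consequently, for $0\le k<d$ the subspace $\mathsf{W}_k$ of $\FF[t^{\pm 1}]$ spanned by the $t^i$ with $i\equiv k$ modulo $d$ is a $\qp$-subrepresentation, and $\V{m, n}{f}=\bigoplus_{k=0}^{d-1}\mathsf{W}_k$. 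Reindexing the basis of $\mathsf{W}_k$ through $t^{k+id}\leftrightarrow s^i$ identifies $\mathsf{W}_k$ with $\FF[s^{\pm 1}]$ carrying the action $x.s^i=s^{i+n/d}$, $y.s^i=f(k+id)\,s^{i-m/d}$, that is, with $\V{m/d, n/d}{f_k}$ where $f_k(i)=f(k+id)$. One checks that $f_k$ satisfies~\eqref{prop} with $n$ replaced by $n/d$ (immediate from~\eqref{prop} for $f$) and that $\gcd(m/d, n/d)=1$, so Proposition~\ref{P:irred} shows each summand is irreducible; this proves~\eqref{dec}.

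For the rigidity statement I would combine the decomposition just established with the uniqueness of the decomposition of a semisimple module of finite length into simple submodules (i.e.\ Jordan--H\"older). Set $d=\gcd(m,n)$ and $d'=\gcd(m',n')$. By~\eqref{dec} applied to both representations, $\V{m, n}{f}$ is a direct sum of $d$ irreducibles and $\V{m', n'}{f'}$ of $d'$ irreducibles; if the two are isomorphic, then $d=d'$ and, after reordering, each summand $\V{m/d, n/d}{f_k}$ is isomorphic to $\V{m'/d', n'/d'}{f'_k}$. In particular $\V{m/d, n/d}{f_0}\simeq \V{m'/d', n'/d'}{f'_0}$ with $\gcd(m/d, n/d)=1$, so Proposition~\ref{P:isoclass}(b) forces $m/d=m'/d'$ and $n/d=n'/d'$; together with $d=d'$ this yields $m=m'$ and $n=n'$.

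I do not anticipate a genuine difficulty here: the first part is essentially bookkeeping with the $\ZZ/d\ZZ$-grading of $\FF[t^{\pm 1}]$, and the second is a routine application of Jordan--H\"older followed by Proposition~\ref{P:isoclass}(b). The only places calling for a moment's care are checking that the reindexing in the first part is an isomorphism of $\qp$-modules and not merely of vector spaces, and that the shifted function $f_k$ still obeys the defining identity with the new period $n/d$ --- both straightforward. One could alternatively extract $m/d=m'/d'$ and $n/d=n'/d'$ directly, as in the proof of Proposition~\ref{P:isoclass}(b), from the fact that $x^a y^b$ acts diagonalizably on $\V{m, n}{f}$ precisely when $(a,b)\in\ZZ_{\ge 0}\cdot(m/d, n/d)$; but one would still need the length count to recover $d$ itself, so going through~\eqref{dec} and Jordan--H\"older seems the most economical route.
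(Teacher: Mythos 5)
Your proposal is correct and follows essentially the same route as the paper: the decomposition into the subspaces $t^k\FF[t^{\pm d}]$ (your $\mathsf{W}_k$) with the reindexing isomorphism, irreducibility of the summands via Proposition~\ref{P:irred}, and then Jordan--H\"older plus Proposition~\ref{P:isoclass}(b) to recover $d$, $m/d$, and $n/d$. No gaps.
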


\begin{proof}
For $0\leq k<d$,  the subspace $t^k \FF[t^{\pm d}]$ of $\V{m, n}{f}$ is readily seen to be invariant under the actions of $x$ and $y$, and we have $\V{m, n}{f} = \bigoplus_{k=0}^{d-1}t^k \FF[t^{\pm d}]$. Thus, next we argue that the subrepresentation $t^k \FF[t^{\pm d}]$ is isomorphic to $\V{m/d, n/d}{f_k}$, where $f_k(i)=f(k+id)$ for all $i\in\ZZ$. First notice that $f_k(i+n/d)=f(k+id+n)=qf(k+id)=qf_k(i)$, so $\V{m/d, n/d}{f_k}$ is defined. Consider the map $\phi: \V{m/d, n/d}{f_k}\rightarrow t^k \FF[t^{\pm d}]$ given by $\phi(p)(t)=t^kp(t^d)$, for all $p\in\FF[t^{\pm 1}]$. In particular, $\phi(t^i)=t^{k+id}$ for $i\in\ZZ$. Still viewing $t^k \FF[t^{\pm d}]$ as a subrepresentation of $\V{m, n}{f}$, we have:
\begin{align*}
 \phi(x.t^i) &=\phi(t^{i+n/d})=t^{k+id+n}=x.t^{k+id}=x.\phi(t^i),\\
 \phi(y.t^i) &=\phi(f_k(i)t^{i-m/d})=f(k+id)t^{k+id-m}=y.t^{k+id}=y.\phi(t^i).
\end{align*}
Since $\phi$ is clearly bijective, the calculations above show that $\phi$ is an isomorphism of representations, and $\V{m, n}{f}\simeq \bigoplus_{k=0}^{d-1}\V{m/d, n/d}{f_k}$. The fact that each summand $\V{m/d, n/d}{f_k}$ is irreducible follows from $\gcd(m/d, n/d)=1$ and Proposition~\ref{P:irred}, which will be established independently. 

Finally, assume $\V{m, n}{f}\simeq \V{m', n'}{f'}$ for positive integers $m'$ and $n'$, and $f':\mathbb{Z} \rightarrow \FF^{*}$ satisfying $f'(i+n')=qf'(i)$, for all $i\in\ZZ$. Then, up to isomorphism, $\V{m, n}{f}$ and $\V{m', n'}{f'}$ have the same composition factors, and in particular the same composition length. This proves that $d=\gcd(m, n)=\gcd(m', n')$ and that $\V{m/d, n/d}{f_0}\simeq \V{m'/d, n'/d}{f'_k}$ for some $k$. By Proposition~\ref{P:isoclass}, which will also be established independently, we have $m/d=m'/d$ and $n/d=n'/d$, so $m=m'$ and $n=n'$.

\end{proof}

\subsection{Weight representations of the form $\V{m, n}{f}$}\label{SS:weight}

Let us now determine when $\V{m, n}{f}$ is a weight representation in the sense of~\cite{vB97}. Recall that this occurs when $\V{m, n}{f}$ is semisimple as a representation over the polynomial subalgebra $\FF[H]$, where $H=xy$. Assume first that $m=n=1$ and fix $\lambda\in\FF^*$. The map $f_\lambda$ defined in Remark~\ref{R:f} is given by $f_\lambda (i)=\lambda q^{i}$ for all $i\in\ZZ$, and the corresponding representation $\V{1, 1}{f_\lambda}\simeq \qp/\qp\left(H-\lambda\right)$ is irreducible. Since $H.t^i=xy.t^i=\lambda q^i t^i$ for all $i$, the decomposition $\V{1, 1}{f_\lambda}=\bigoplus_{i\in\ZZ}\FF\, t^i$ shows that $\V{1, 1}{f_\lambda}$ is semisimple over $\FF[H]$. Moreover, for $\nu\in\FF^*$, $\V{1, 1}{f_\lambda}\simeq \V{1, 1}{f_\nu}$ if and only if $\lambda/\nu\in\langle q\rangle$, the multiplicative subgroup of $\FF^*$ generated by $q$, by Proposition~\ref{P:isoclass}. In case $\FF$ is algebraically closed, these are all the infinite-dimensional irreducible weight representations of $\qp$, by \cite[Cor. 3.2]{vB97}. Combined with Proposition~\ref{P:isoclass}(b) the above yields the classification of irreducible weight representations in the family $\V{m, n}{f}$.

\begin{prop}\label{P:weight}
Assume  $\gcd(m, n)=1$ and $f:\mathbb{Z} \rightarrow \FF^{*}$ satisfies~\eqref{prop}. Then $\V{m, n}{f}$ is a weight representation if and only if $m=n=1$.
\end{prop}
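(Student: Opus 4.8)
The plan is to analyze the action of $H = xy$ on $\V{m,n}{f}$ directly on the basis $\{t^i\}_{i\in\ZZ}$ of $\FF[t^{\pm 1}]$ and split into two cases according to whether $m = n = 1$ or not. The case $m = n = 1$ is already handled in the discussion preceding the proposition: there $f = f_\lambda$ with $f_\lambda(i) = \lambda q^i$, so $H.t^i = xy.t^i = x.(\lambda q^i t^{i-1}) = \lambda q^i t^i$, exhibiting $\V{1,1}{f_\lambda} = \bigoplus_{i\in\ZZ}\FF\,t^i$ as a direct sum of one-dimensional $\FF[H]$-submodules, hence semisimple over $\FF[H]$. So the substance of the proof is the converse: assuming $\gcd(m,n) = 1$ but $(m,n) \neq (1,1)$, show $\V{m,n}{f}$ is \emph{not} semisimple over $\FF[H]$.

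For the converse, first I would compute $H.t^i = xy.t^i = f(i)\,x.t^{i-m} = f(i)\,t^{i-m+n}$. Since $\gcd(m,n) = 1$ and $(m,n) \neq (1,1)$, we have $m \neq n$, so $-m+n \neq 0$ and $H$ strictly shifts the exponent. Thus $H$ maps the basis vector $t^i$ to a nonzero multiple of $t^{i+(n-m)}$, so no $t^i$ is an eigenvector, and more importantly the powers $H^k.t^i$ are nonzero multiples of $t^{i + k(n-m)}$ for all $k \geq 0$, which are linearly independent. This already shows $\FF[H]$ acts \emph{torsionfreely} on $\V{m,n}{f}$: if $p(H).v = 0$ for some nonzero $p \in \FF[H]$ and $0 \neq v \in \V{m,n}{f}$, then expanding $v$ in the basis and looking at the ``extreme'' exponent (largest or smallest $i$ appearing, pushed in the direction of the shift by the top-degree term of $p$) forces a contradiction since the leading coefficient is a product of nonzero $f$-values times a power of $q$. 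Once $\FF[H]$-torsionfreeness is established, semisimplicity over $\FF[H]$ is impossible: a nonzero semisimple $\FF[H]$-module over the PID $\FF[H]$ must contain a simple submodule, i.e. a copy of $\FF[H]/\mathfrak{m}$ for a maximal ideal $\mathfrak{m}$, which is torsion — contradicting torsionfreeness. (Alternatively, one can invoke directly that a torsionfree module over the PID $\FF[H]$ has no nonzero finite-length submodules, while semisimplicity would produce simple, hence finite-length, submodules.)

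A cleaner route that avoids the extremal-exponent bookkeeping: observe that $H = xy$ and $x$ is a unit-like (normal, injective) operator on $\V{m,n}{f}$ — indeed $x.t^i = t^{i+n}$ is injective with left inverse the ``shift down by $n$'' map, and $y$ is also injective since each $f(i) \neq 0$. One has the relation $yx = qxy = qH$, and conjugation by $x$ sends $H$ to $q^{-1}H$ up to the normality relations of the quantum plane; tracking this shows that if $p(H)$ annihilates some $t^i$ then a $q$-shifted polynomial $p(qH)$ (or similar) annihilates $t^{i\pm\text{something}}$, and iterating with $q$ not a root of unity forces $p = 0$. Either way the key input is $n - m \neq 0$, which is exactly where $\gcd(m,n) = 1$ together with $(m,n)\neq(1,1)$ is used.

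The main obstacle, I expect, is making the torsionfreeness argument airtight: one must be careful that a general element $v = \sum_{i\in I} c_i t^i$ (finite support $I$) is not killed by any nonzero $p(H) = \sum_{j=0}^{d} a_j H^j$ with $a_d \neq 0$. The clean way is to track the top exponent: if $n > m$, let $i_{\max} = \max I$; then the coefficient of $t^{i_{\max} + d(n-m)}$ in $p(H).v$ is $a_d \cdot c_{i_{\max}} \cdot \prod(\text{nonzero } f\text{-values})$, which is nonzero, so $p(H).v \neq 0$. (If $n < m$ use $i_{\min}$ instead, and if one prefers to sidestep signs, replace $H$ by $xy$ versus $yx$ appropriately.) Once this is in hand, the incompatibility of torsionfreeness with semisimplicity over the PID $\FF[H]$ is immediate, completing the proof.
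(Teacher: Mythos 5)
Your proof is correct and rests on the same key computation as the paper's: $H.t^i = f(i)\,t^{i+n-m}$ with $n-m\neq 0$ (forced by $\gcd(m,n)=1$ and $(m,n)\neq(1,1)$), so the powers of $H$ carry $t^i$ through infinitely many distinct basis vectors. The paper concludes slightly more directly --- semisimplicity over $\FF[H]$ forces $\dim_\FF \FF[H].1<+\infty$, which the shift immediately contradicts --- so your full torsionfreeness argument via the extremal exponent, while valid, proves more than is needed.
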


For completeness, we include a brief and direct proof of Proposition~\ref{P:weight} not assuming that $\FF$ is algebraically closed, a condition that was used implicitly at the end of the previous paragraph.

\begin{proof}
Assume first that $m=n=1$. Then since $f$ satisfies~\eqref{prop} we have $f=f_\lambda$ for $\lambda=f(0)$ and the discussion above shows that $\V{m, n}{f}$ is a weight representation of $\qp$. Conversely, suppose $\V{m, n}{f}$ is a weight representation of $\qp$. Then clearly  $\dim_\FF \FF[H].v<+\infty$ for any $v\in\V{m, n}{f}$. Notice that, for all $i\in\ZZ$, $H.t^i=xy.t^i=f(i)t^{i+n-m}$. Thus, for $\ell\in\ZZ$, $H^\ell.t^i=\zeta t^{i+\ell(n-m)}$ for some $\zeta\in\FF^*$. But then the condition $\dim_\FF \FF[H].1<+\infty$ immediately implies $m=n$, and hence $m=n=1$, as $\gcd(m, n)=1$.
\end{proof}

\begin{remark}
Given arbitrary positive integers $m$ and $n$, and  $f$ satisfying~\eqref{prop}, the representation $\V{m, n}{f}$ is a weight representation if and only if $m=n$. The direct implication follows from the proof of Proposition~\ref{P:weight}. For the converse implication, recall that $\V{m, m}{f}$ is the direct sum of $m$ representations of the form $\V{1, 1}{f_k}$, for $0\leq k<m$, by Proposition~\ref{P:dec}, so the claim follows as each of these is a weight representation.
\end{remark}

\section{Connections with the representation theory of the $q$-Weyl algebra $\mathbb{A}_1(q)$}\label{S:conn}

We continue to assume $q\in\FF^*$ is not a root of unity. Let $\mathbb{A}_1(q)$ be the $q$-Weyl algebra given by generators $X$ and $Y$ and defining relation $YX-qXY=1$, as in~\eqref{E:qwa}. It is straightforward to show that $\{x^k \mid k\geq 0\}$ is a right and left Ore set consisting of regular elements of $\qp$, and we denote the corresponding localization by $\FF_q[x^{\pm 1}, y]$. The calculation
\begin{equation*}
\big(x^{-1}(y-1)\big)x-qx\big(x^{-1}(y-1)\big)=x^{-1}yx -q(y-1)-1= qy -q(y-1)-1=q-1
\end{equation*}
shows that there is an algebra map 
\begin{equation}\label{E:qwainqp}
\mathbb{A}_1(q)\rightarrow \FF_q[x^{\pm 1}, y], \quad \mbox{with \quad $X\mapsto x,\quad Y\mapsto \frac{1}{q-1}x^{-1}(y-1)$.}
\end{equation}
To see that the map in \eqref{E:qwainqp} is injective we can argue as follows. The multiplicative subset $\{X^k \mid k\geq 0\}$ of $\mathbb{A}_1(q)$ is a right and left Ore set of regular elements and we denote the corresponding localization by $\widehat{\mathbb{A}}_1(q)$. Then the map in \eqref{E:qwainqp} extends to a map $\widehat{\mathbb{A}}_1(q)\rightarrow \FF_q[x^{\pm 1}, y]$, which has an inverse $\FF_q[x^{\pm 1}, y] \rightarrow \widehat{\mathbb{A}}_1(q)$ with $x^{\pm 1}\mapsto X^{\pm 1}$ and $y\mapsto (q-1)XY+1$. It follows that \eqref{E:qwainqp} induces an isomorphism $\widehat{\mathbb{A}}_1(q)\simeq \FF_q[x^{\pm 1}, y]$, and in particular \eqref{E:qwainqp} is injective. In view of the above we will identify $X$ with $x$, $Y$ with $\frac{1}{q-1}x^{-1}(y-1)$ and $\mathbb{A}_1(q)$ with the corresponding subalgebra of $\FF_q[x^{\pm 1}, y]$. Since $y=(q-1)XY+1=YX-XY$, we have the embeddings
\begin{equation}\label{E:embed}
\qp \subseteq \mathbb{A}_1(q)\subseteq \FF_q[x^{\pm 1}, y]=\widehat{\mathbb{A}}_1(q).
\end{equation}

\subsection{Extension of the representations $\V{m, n}{f}$ to $\mathbb{A}_1(q)$}\label{SS:qwa}

Our aim in this subsection is to extend the action of $\qp$ on $\V{m, n}{f}$ to an action of the $q$-Weyl algebra $\mathbb{A}_1(q)$. Assume thus that $m, n$ are positive integers and $f:\mathbb{Z} \rightarrow \FF^{*}$ satisfies~\eqref{prop}. If $\rho^{m, n}_{f}:\qp\rightarrow \mathrm{End}_\FF (\V{m, n}{f})$ is the representation of $\qp$ on $\V{m, n}{f}$, we first observe that $\rho^{m, n}_{f}(x)$ is an invertible linear map on $\V{m, n}{f}$, a fact which is clear from \eqref{action}. Therefore $\rho^{m, n}_{f}$ extends to the localization $\FF_q[x^{\pm 1}, y]$, and  $\V{m, n}{f}$ can be seen as a representation of $\FF_q[x^{\pm 1}, y]$ with $x^{-1}.t^{i}=t^{i-n}$ for all $i\in\ZZ$. Now we get an action of $\mathbb{A}_1(q)$ on $\V{m, n}{f}=\FF[t^{\pm 1}]$ by restricting $\rho^{m, n}_{f}$:
\begin{align}\label{E:qwaaction}\nonumber
&X.t^{i} =x.t^{i}=t^{i+n},\\  
&Y.t^{i} =\frac{1}{q-1}x^{-1}(y-1).t^{i}=\frac{1}{q-1}(f(i)t^{i-m-n}-t^{i-n}),\quad \quad \text{for all $i\in\ZZ$.}
\end{align}

In our next result we view $\V{m, n}{f}$ as a representation of $\mathbb{A}_1(q)$, as above.

\begin{prop}
 Assume  $\gcd(m, n)=1$ and $f:\mathbb{Z} \rightarrow \FF^{*}$ satisfies~\eqref{prop}. Then:
 \begin{enumerate}
 \item[\textup{(a)}] $\V{m, n}{f}$ defined by~\eqref{E:qwaaction} is an irreducible representation of $\mathbb{A}_1(q)$.
\item[\textup{(b)}]  For positive integers $m', n'$, and $f':\mathbb{Z} \rightarrow \FF^{*}$ satisfying \eqref{prop} (with $n$ replaced by $n'$), 
we have $\V{m, n}{f}\simeq \V{m', n'}{f'}$ as representations of $\mathbb{A}_1(q)$ if and only if $m=m'$, $n=n'$ and $\s{f'}(0)=q^{k}\s{f}(0)$ for some $k\in\ZZ$.
\item[\textup{(c)}] $\V{m, n}{f}$ is not semisimple as a representation over the polynomial subalgebra of $\mathbb{A}_1(q)$ generated by $XY$; hence, $\V{m, n}{f}$ is not a weight representation of $\mathbb{A}_1(q)$ in the sense of \cite{vB97}.
\end{enumerate}
\end{prop}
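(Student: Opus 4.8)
The plan is to prove the three parts in order, leveraging the identification of $\V{m, n}{f}$ as a representation of $\FF_q[x^{\pm 1}, y]$ obtained by localizing the $\qp$-action, together with the embeddings in \eqref{E:embed}.

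For part (a), I would argue that any nonzero $\mathbb{A}_1(q)$-subrepresentation $\mathsf{W}$ of $\V{m, n}{f}$ is automatically a $\qp$-subrepresentation, and then invoke Proposition~\ref{P:irred}. The point is that $x = X$ acts on $\mathsf{W}$ (since $x\in\mathbb{A}_1(q)$), and $y = YX - XY$ also acts on $\mathsf{W}$ (since $y\in\mathbb{A}_1(q)$, by \eqref{E:embed}). Hence $\mathsf{W}$ is stable under the $\qp$-action, forcing $\mathsf{W}=\V{m, n}{f}$ by irreducibility over $\qp$; thus $\V{m, n}{f}$ is irreducible over $\mathbb{A}_1(q)$.

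For part (b), the \emph{if} direction follows exactly as in Proposition~\ref{P:isoclass}(b): the isomorphisms $\V{m, n}{f}\simeq\qp/\qp(x^m y^n - q^k\s{f}(0))$ are built on generators and relations that live inside $\mathbb{A}_1(q)$ (since $x, y\in\mathbb{A}_1(q)$), so the same module map is an $\mathbb{A}_1(q)$-isomorphism; alternatively, any $\qp$-isomorphism that additionally respects $x^{-1}$ is an $\FF_q[x^{\pm1},y]$-isomorphism and hence an $\mathbb{A}_1(q)$-isomorphism, and the maps in question visibly do respect $x^{-1}$. For the converse, an $\mathbb{A}_1(q)$-isomorphism $\V{m, n}{f}\simeq\V{m', n'}{f'}$ restricts to a $\qp$-isomorphism (again since $\qp\subseteq\mathbb{A}_1(q)$), so the conclusion is immediate from Proposition~\ref{P:isoclass}(b).

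For part (c), write $Z = XY$ and note from \eqref{E:qwaaction} that $Z.t^i = X.(Y.t^i) = \tfrac{1}{q-1}\bigl(f(i)t^{i-m} - t^i\bigr)$; in particular $Z.t^i$ is a combination of $t^{i-m}$ and $t^i$ with the $t^{i-m}$-coefficient $\tfrac{f(i)}{q-1}$ nonzero. Applying $Z$ repeatedly, $Z^\ell.t^i$ is a nonzero-leading combination of $t^{i-jm}$ for $0\le j\le\ell$, so $\{Z^\ell.t^i : \ell\ge 0\}$ spans an infinite-dimensional subspace and $\dim_\FF \FF[Z].t^i = +\infty$ whenever $m\ge 1$ (which always holds). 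A finitely generated module over the PID $\FF[Z]$ that is semisimple is a finite direct sum of modules $\FF[Z]/(p)$ with $p$ irreducible, each finite-dimensional over... — more carefully, I would argue directly: if $\V{m, n}{f}$ were semisimple over $\FF[Z]$, then $\FF[Z].t^0$, being cyclic and contained in a semisimple module, would itself be semisimple and finitely generated, hence a finite direct sum of simple $\FF[Z]$-modules, each of the form $\FF[Z]/\mathfrak{m}$ with $\mathfrak{m}$ maximal; such a module is finite-dimensional over $\FF$, contradicting $\dim_\FF\FF[Z].t^0=\infty$. Therefore $\V{m, n}{f}$ is not semisimple over $\FF[Z]=\FF[XY]$, so it is not a weight representation of $\mathbb{A}_1(q)$ in the sense of \cite{vB97}. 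The main obstacle is the bookkeeping in part (c): one must be careful that $Z^\ell.t^0$ genuinely has nonvanishing lowest-degree term (the product $\prod_{j} f(\cdot)/(q-1)$ is a product of nonzero scalars, so this is fine) and that semisimplicity of the ambient module really does pass to the cyclic submodule $\FF[Z].t^0$ — this uses that submodules of semisimple modules are semisimple, which holds over any ring.
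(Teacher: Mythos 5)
Your proposal is correct and follows essentially the same route as the paper: parts (a) and the ``only if'' half of (b) come from the embedding $\qp\subseteq\mathbb{A}_1(q)$ together with Propositions~\ref{P:irred} and~\ref{P:isoclass}, the ``if'' half of (b) from the fact that a $\qp$-isomorphism commutes with $x^{-1}$ and hence extends over $\FF_q[x^{\pm 1},y]$, and (c) from the observation that $\FF[XY].v$ is infinite dimensional (the paper notes $\FF[XY]=\FF[y]$ and uses $y.t^i=f(i)t^{i-m}$, which is the same computation you do with $Z=XY=\frac{1}{q-1}(y-1)$). Your extra care in (c) about cyclic semisimple $\FF[Z]$-modules being finite dimensional is a correct filling-in of a step the paper leaves implicit.
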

\begin{proof}
Part (a) and the direct implication in (b) follow from the embedding \eqref{E:embed}, and from Propositions \ref{P:irred} and \ref{P:isoclass}.
 
Suppose now $f':\mathbb{Z} \rightarrow \FF^{*}$ satisfies \eqref{prop}, and there is $k\in\ZZ$ so that $\s{f'}(0)=q^{k}\s{f}(0)$. Then by  Proposition~\ref{P:isoclass} there is an isomorphism $\phi : \V{m, n}{f}\rightarrow \V{m, n}{f'}$ as representations of $\qp$. For $v\in\V{m, n}{f}$ we have $\phi(v)=\phi(xx^{-1}.v)=x.\phi(x^{-1}.v)$, thus $\phi(x^{-1}.v)=x^{-1}.\phi(v)$. Whence $\phi$ is an isomorphism of representations of $\FF_q[x^{\pm 1}, y]$. The other implication in (b) now follows from \eqref{E:embed}.

Observe that $XY=\frac{1}{q-1}(y-1)$, so the polynomial subalgebra of $\mathbb{A}_1(q)$ generated by $XY$ is just $\FF[y]$. Given $0\neq v\in\V{m, n}{f}$, the formula $y.t^{i}=f(i)t^{i-m}$ for $i\in\ZZ$ implies $\dim_\FF \FF[y].v=+\infty$. Hence, $\V{m, n}{f}$ is not semisimple over $\FF[y]=\FF[XY]$, and therefore it is not a weight representation of $\mathbb{A}_1(q)$ in the sense of \cite{vB97}.
\end{proof}

\begin{remark}
In~\cite{BO09} the authors introduce Whittaker representations for generalized Weyl algebras. For the cases covered in this note, a representation $\mathsf{V}$ is a \emph{Whittaker representation} for $\qp$ (respectively, for $\mathbb{A}_1(q)$) if $\mathsf{V}$ is generated by an element $v\in \mathsf{V}$ which is an eigenvector for the action of $x\in\qp$ (respectively, for the action of $X\in \mathbb{A}_1(q)$). Since $m, n\geq 1$, it is immediate that the operators $x, y\in\qp$ (respectively, $X, Y\in\mathbb{A}_1(q)$) have no eigenvectors in $\V{m, n}{f}$, so $\V{m, n}{f}$ is not a Whittaker representation for the quantum plane (respectively, for the $q$-Weyl algebra).
\end{remark}

\subsection{The representations $\W{n}{}$ of $\mathbb{A}_1(q)$ and their restriction to $\qp$}\label{SS:restriction}

We will now use a similar idea to construct representations of the $q$-Weyl algebra on the Laurent polynomial algebra $\FF[t^{\pm 1}]$. Fix positive integers $m, n\in\ZZ_{>0}$ and a function $g:\mathbb{Z} \rightarrow \FF$. Then the formulas 
\begin{equation}\label{qwa:action}
X.t^{i}=t^{i+n}, \quad \quad Y.t^{i}=g(i)t^{i-m},\quad \quad \text{for all $i\in\ZZ$}
\end{equation}
yield a representation of $\mathbb{A}_1(q)$ on $\FF[t^{\pm 1}]$ if and only if $m=n$ and $g$ satisfies
\begin{equation}\label{qwa:prop}
g(i+n)=qg(i)+1, \quad \quad \text{for all $i\in\ZZ$.}
\end{equation}
We denote the corresponding representation of $\mathbb{A}_1(q)$ by $\W{n}{}$. Notice that for all $i\in\ZZ$
\begin{equation}\label{qwa:xy:com}
XY.t^i=g(i)t^i, \quad \quad (YX-XY).t^i=\left(g(i+n)-g(i)\right)t^i=\left((q-1)g(i)+1\right)t^i,
\end{equation}
so $\W{n}{}$ is a weight representation of $\mathbb{A}_1(q)$ in the sense of \cite{vB97}.

\begin{remark}
It follows from the computations at the beginning of Section~\ref{S:conn} that the element $YX-XY$ is normal in $\mathbb{A}_1(q)$ and it is sometimes referred to as a Casimir element, in spite of not being central. The equality $YX-XY=(q-1)XY+1$ shows that $YX-XY$ and $(q-1)XY+1$ generate the same subalgebra of $\mathbb{A}_1(q)$ and thus a weight representation of $\mathbb{A}_1(q)$ could be defined in an equivalent manner as a representation which is semisimple over the subalgebra generated by the Casimir element $YX-XY$. 
\end{remark}

Our first observation is the analogue of Proposition~\ref{P:dec}.

\begin{lemma}\label{L:qwa:dec}
Let  $n\in\ZZ_{>0}$ and assume $g:\mathbb{Z} \rightarrow \FF$ satisfies~\eqref{qwa:prop}. There is a direct sum decomposition
\begin{equation}\label{qwa:dec}
\W{n}{}\simeq \bigoplus_{k=0}^{n-1}\W{1}{k},
\end{equation}
where $g_k(i)=g(k+in)$, for $0\leq k<n$ and $i\in\ZZ$. 
\end{lemma}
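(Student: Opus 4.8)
The plan is to mimic the proof of Proposition~\ref{P:dec} directly, since the situation is entirely parallel: the only differences are that the twisting relation is now the affine relation~\eqref{qwa:prop} rather than the multiplicative relation~\eqref{prop}, and that here $m=n$ so $d=\gcd(m,n)=n$ and $m/d=n/d=1$. First I would fix $n\in\ZZ_{>0}$ and $g$ satisfying~\eqref{qwa:prop}, and observe that for each $0\leq k<n$ the subspace $t^k\FF[t^{\pm n}]$ of $\W{n}{}$ is invariant under both $X$ and $Y$: indeed $X.t^{k+in}=t^{k+(i+1)n}$ and $Y.t^{k+in}=g(k+in)t^{k+(i-1)n}$ by~\eqref{qwa:action} (recall $m=n$), both of which lie in $t^k\FF[t^{\pm n}]$. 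Since $\W{n}{}=\FF[t^{\pm 1}]=\bigoplus_{k=0}^{n-1}t^k\FF[t^{\pm n}]$ as vector spaces, this already gives the direct sum decomposition at the level of subrepresentations.

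Next I would check that each $\W{1}{k}$ is well-defined and isomorphic to the corresponding summand. Set $g_k(i)=g(k+in)$; then $g_k(i+1)=g(k+(i+1)n)=g((k+in)+n)=qg(k+in)+1=qg_k(i)+1$, so $g_k$ satisfies~\eqref{qwa:prop} with $n$ replaced by $1$, and hence $\W{1}{k}$ is defined on $\FF[t^{\pm 1}]$. I would then define $\phi_k:\W{1}{k}\to t^k\FF[t^{\pm n}]$ by $\phi_k(p)(t)=t^k p(t^n)$, so that $\phi_k(t^i)=t^{k+in}$, and verify the two intertwining identities exactly as in Proposition~\ref{P:dec}:
\begin{align*}
\phi_k(X.t^i)&=\phi_k(t^{i+1})=t^{k+(i+1)n}=X.t^{k+in}=X.\phi_k(t^i),\\
\phi_k(Y.t^i)&=\phi_k(g_k(i)t^{i-1})=g(k+in)t^{k+(i-1)n}=Y.t^{k+in}=Y.\phi_k(t^i).
\end{align*}
Since $\phi_k$ is visibly a bijection (it sends the basis $\{t^i\}$ of $\W{1}{k}$ onto the basis $\{t^{k+in}\}$ of $t^k\FF[t^{\pm n}]$), it is an isomorphism of $\mathbb{A}_1(q)$-representations. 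Summing over $k$ yields~\eqref{qwa:dec}.

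There is no serious obstacle here; the argument is routine once one notices the affine-linear analogue of Lemma~\ref{L:NT} is not even needed because we are not claiming irreducibility of the summands. The only point requiring a little care is the bookkeeping that $m=n$ forces the ``$y$-shift'' by $m/d=1$ to match the $n$-fold grading on $t^k\FF[t^{\pm n}]$, i.e.\ that the map $\phi_k$ really does intertwine $Y$; but this is exactly the computation displayed above, and it goes through verbatim from the proof of Proposition~\ref{P:dec} with $q f(i)$ replaced by $qg(i)+1$ and with the roles of $y$ and $Y$ interchanged. I would therefore keep the write-up short, referring back to Proposition~\ref{P:dec} for the structure of the argument and only spelling out the verification of~\eqref{qwa:prop} for $g_k$ and the two intertwining identities for $\phi_k$.
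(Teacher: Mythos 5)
Your proposal is correct and follows essentially the same route as the paper: the paper's proof also decomposes $\W{n}{}=\bigoplus_{k=0}^{n-1}t^k\FF[t^{\pm n}]$ into $X$- and $Y$-invariant subspaces and uses the map $\phi(p)(t)=t^kp(t^n)$, exactly as in Proposition~\ref{P:dec}. Your write-up merely spells out the verification that $g_k$ satisfies~\eqref{qwa:prop} and the intertwining identities, which the paper leaves as ``easily checked.''
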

\begin{proof}
For  $0\leq k<n$, the subspace $t^k \FF[t^{\pm n}]$ is invariant under the actions of $X$ and $Y$ and $\W{n}{}=\bigoplus_{k=0}^{n-1}t^k \FF[t^{\pm n}]$. Moreover, the map $\phi: \W{1}{k}\rightarrow t^k \FF[t^{\pm n}]$ given by $\phi(p)(t)=t^kp(t^n)$, for all $p\in\FF[t^{\pm 1}]$ is easily checked to be an isomorphism.
\end{proof}

In view of the above, it is enough to study the structure of the representations $\W{1}{}$, where $g:\mathbb{Z} \rightarrow \FF$ satisfies $g(i+1)=qg(i)+1$ for all $i\in\ZZ$. Equivalently, $g(i)=g(0)q^i + [i]_q$, where $[i]_q=\frac{q^i-1}{q-1}$ for all $i\in\ZZ$. 

\begin{prop}\label{P:qwa:w:irriso}
Let $g, g':\mathbb{Z} \rightarrow \FF$ satisfy~\eqref{qwa:prop} with $n=1$. Then:
\begin{enumerate}
\item[\textup{(a)}] $\W{1}{}\simeq\mathsf{W}^{1}_{g'}$ if and only if $g(0)=g'(i)$ for some $i\in\ZZ$;
\item[\textup{(b)}] $\W{1}{}$ is irreducible if and only if $g(0)\notin \{ [i]_q \mid i\in\ZZ\}\cup\left\{ -\frac{1}{q-1}\right\}$.
\end{enumerate}
\end{prop}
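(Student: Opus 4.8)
The plan is to mimic the structure of the proof of Proposition \ref{P:isoclass}, replacing the role of $\theta=x^m y^n$ by $XY$, which acts diagonally on $\W{1}{}$ via $XY.t^i=g(i)t^i$ by \eqref{qwa:xy:com}. For part (a), suppose first that $g(0)=g'(i_0)$ for some $i_0\in\ZZ$. Using the recursion $g(i+1)=qg(i)+1$ one checks by a straightforward induction that $g(0)=g'(i_0)$ forces $g(i)=g'(i+i_0)$ for all $i\in\ZZ$ (the recursion is invertible since $q\neq 0$, so knowing the two sequences agree at one index propagates the agreement in both directions). Then the shift map $\phi:\W{1}{}\to\mathsf{W}^1_{g'}$, $\phi(t^i)=t^{i+i_0}$, is readily seen to intertwine the $X$- and $Y$-actions, giving $\W{1}{}\simeq\mathsf{W}^1_{g'}$. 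For the converse, note that the eigenvalues of the operator $XY$ on $\W{1}{}$ are exactly $\{g(i)\mid i\in\ZZ\}$, each with a one-dimensional eigenspace $\FF t^i$ (the $g(i)$ need not be distinct a priori, but this does not affect the argument: $g(0)$ is certainly an eigenvalue). An isomorphism $\W{1}{}\simeq\mathsf{W}^1_{g'}$ must carry $XY$ to $XY$, hence match eigenvalues, so $g(0)=g'(i)$ for some $i$.

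For part (b), I would argue as in Proposition \ref{P:irred}. Let $\mathsf{W}\subseteq\W{1}{}$ be a nonzero subrepresentation and pick $0\neq p\in\mathsf{W}$. Since $XY$ acts diagonally with eigenvalues $g(i)$ on the lines $\FF t^i$, a Vandermonde/eigenspace-separation argument shows that whenever the $g(i)$ occurring in the support of $p$ are pairwise distinct, one can extract some $t^\ell\in\mathsf{W}$; the degenerate case where $g$ takes a repeated value needs separate handling, but in fact when $g$ is \emph{not} eventually constant the sequence $(g(i))$ can repeat a value at most finitely often in any bounded range, and one can still isolate a monomial — alternatively one observes that the distinct values among any finite set suffice to produce a nonzero element supported on a single $g$-fiber, which is then a sum of monomials that $X$ and $Y$ permute, again yielding a monomial. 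Once $t^\ell\in\mathsf{W}$, I use $X.t^i=t^{i+1}$ and $Y.t^i=g(i)t^{i-1}$ to climb up and down: $X$ gives all $t^k$ with $k\geq\ell$ unconditionally, while $Y$ gives $t^{i-1}$ from $t^i$ precisely when $g(i)\neq 0$. Thus $\mathsf{W}=\W{1}{}$ unless $g(i)=0$ for some $i$, i.e.\ unless $g(0)q^i+[i]_q=0$ for some $i$, which rearranges to $g(0)=-[i]_q q^{-i}=-\frac{1-q^{-i}}{q-1}=[-i]_q$, i.e.\ $g(0)\in\{[j]_q\mid j\in\ZZ\}$. Conversely, if $g(i_0)=0$ then $\spann\{t^k\mid k< i_0\}$ is a proper nonzero subrepresentation, so $\W{1}{}$ is reducible.

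It remains to dispose of the extra excluded value $-\frac{1}{q-1}$. This is exactly the case where $g$ is the constant function $g\equiv -\frac{1}{q-1}$: indeed $g(0)=-\frac{1}{q-1}$ and the recursion give $g(i+1)=q\cdot(-\frac{1}{q-1})+1=-\frac{1}{q-1}=g(0)$ for all $i$, so $g$ is constant; conversely a constant solution of \eqref{qwa:prop} must equal $-\frac{1}{q-1}$. In this degenerate case $XY$ acts as the scalar $-\frac{1}{q-1}$ on all of $\W{1}{}$, so every $\FF$-subspace stable under $X$ and $Y$ is a subrepresentation; since $g(i)\neq 0$ for every $i$, the element $X$ alone is a bijection and the subspaces $\spann\{t^k\mid k\geq\ell\}$ are proper nonzero $Y$-stable, $X$-stable subspaces — so $\W{1}{}$ is reducible, accounting for the $-\frac{1}{q-1}$ exclusion. (Note this is consistent with part (a): when $g\equiv -\frac{1}{q-1}$, every shift of $g$ equals $g$, matching the fact that $g(0)=g(i)$ for all $i$.) The main obstacle I anticipate is the eigenspace-extraction step in the reducibility argument when the sequence $(g(i))$ is not injective; the cleanest fix is to observe that since $q$ is not a root of unity, $g(i)=g(j)$ with $i\neq j$ forces $g(0)q^i+[i]_q=g(0)q^j+[j]_q$, i.e.\ $g(0)(q^i-q^j)=-([i]_q-[j]_q)=-\frac{q^i-q^j}{q-1}$, hence $g(0)=-\frac{1}{q-1}$ — so \emph{outside} the constant case the sequence $(g(i))_{i\in\ZZ}$ is injective, and the Vandermonde argument applies without qualification.
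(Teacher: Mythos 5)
Your overall strategy for both parts is sound and close to the paper's: part (a) is argued exactly as in the paper (a shift isomorphism in one direction, matching of the $XY$-eigenvalues in the other), and for part (b) your observation that $g$ is injective unless $g\equiv-\frac{1}{q-1}$, combined with eigenspace separation for $XY$ and the climb-up/climb-down argument from a monomial, is a valid substitute for the paper's minimal-degree argument. However, both of the explicit proper subrepresentations you exhibit in the reducible cases are wrong. First, if $g(i_0)=0$ then $\spann\{t^k\mid k<i_0\}$ is not $X$-stable, since $X.t^{i_0-1}=t^{i_0}$ escapes it; the invariant subspace is the upward-closed one, $t^{i_0}\FF[t]=\spann\{t^k\mid k\geq i_0\}$, which works because $Y$ lowers degree and kills $t^{i_0}$ (this is the paper's choice, phrased as $t^{-i}\FF[t]$ when $g(0)=[i]_q$). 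This is a direction slip and is trivially repaired.

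Second, and more seriously, in the constant case $g\equiv-\frac{1}{q-1}$ the subspace $\spann\{t^k\mid k\geq\ell\}$ is not $Y$-stable: $Y.t^{\ell}=-\frac{1}{q-1}\,t^{\ell-1}\neq 0$ lies outside it. In fact no span of monomials can work here, because $g(i)\neq 0$ for all $i$, so from any single $t^{\ell}$ the operators $X$ and $Y$ generate all of $\FF[t^{\pm 1}]$ --- which is precisely why your monomial-extraction step had to exclude this case to begin with. You need a non-monomial subrepresentation. The paper uses $(t-1)\FF[t^{\pm 1}]$: in the constant case $X$ and $Y$ act by multiplication by $t$ and by $-\frac{1}{q-1}t^{-1}$ respectively, so every ideal of $\FF[t^{\pm 1}]$ is invariant, and any proper nonzero ideal witnesses reducibility. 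With these two corrections your argument goes through.
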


\begin{proof}
For (a), suppose $\W{1}{}\simeq\mathsf{W}^{1}_{g'}$. By~\eqref{qwa:xy:com} the eigenvalues of $XY$ on $\W{1}{}$ are $g(i)$, for $i\in\ZZ$ and similarly the eigenvalues of $XY$ on $\mathsf{W}^{1}_{g'}$ are $g'(i)$, for $i\in\ZZ$. Thus, $g$ and $g'$ must have the same image and in particular $g(0)=g'(i)$ for some $i\in\ZZ$. Conversely, if the latter holds then the map $\phi:\W{1}{}\rightarrow\mathsf{W}^{1}_{g'}$ given by $\phi(p)(t)=t^ip(t)$ for all $p\in\FF[t^{\pm 1}]$ is an isomorphism.

For (b), first observe that for $i\in\ZZ$ we have $g(0)=[i]_q\iff g(-i)=0$. Thus, if $g(0)=[i]_q$ for some $i\in\ZZ$, then $t^{-i}\FF[t]$ is invariant under the actions of $X$ and $Y$, so $\W{1}{}$ is not irreducible in this case. Next observe that $g(0)=-\frac{1}{q-1}\iff g$ is not injective $\iff g$ is constant. It follows that if $g(0)=-\frac{1}{q-1}$, then $(t-1)\FF[t^{\pm 1}]$ is a proper subrepresentation and hence $\W{1}{}$ is not irreducible. This proves the direct implication in (b). For the converse, by the observations above, we can assume that $g(i)\neq 0$ for all $i\in\ZZ$ and that $g$ is injective. Let $\mathsf{S}$ be a nonzero subrepresentation of $\W{1}{}$. By repeatedly applying the operator $X$ to a chosen nonzero element of $\mathsf{S}$, we will obtain a nonzero element of $\mathsf{S}\cap\FF[t]$. Let $p$ be one such element, chosen so that it has minimum degree, say $p=\sum_{k=0}^d a_k t^k$, with $a_d\neq 0$. Since $g(i)\neq 0$ for all $i\in\ZZ$, the minimality of $p$ implies that $a_0\neq 0$. Then 
\begin{equation*}
\mathsf{S}\cap\FF[t]\ni (XY-g(d)).p=\sum_{k=0}^{d-1} (g(k)-g(d))a_k t^k.
\end{equation*}
By the minimality of $p$ we must have $(XY-g(d)).p=0$. Hence, $g(0)=g(d)$ and the injectivity of $g$ gives $d=0$. It follows that $t^0\in\mathsf{S}$ and thus $\mathsf{S}=\W{1}{}$.
\end{proof}

Now that we understand the representations $\W{n}{}$, we will consider their restriction to $\qp$ via each of the two embeddings
\begin{align}\label{E:emb:sigma}
\sigma &: \qp\rightarrow \mathbb{A}_1(q),  \quad\quad x\mapsto X, \quad y\mapsto YX-XY=(q-1)XY+1;\\ \label{E:emb:tau}
\tau &: \qp\rightarrow \mathbb{A}_1(q),  \quad\quad x\mapsto YX-XY=(q-1)XY+1, \quad y\mapsto Y.
\end{align}

We consider first the restriction relative to $\sigma$. In this case, the action of $\qp$ on $\W{n}{}$ is given by
\begin{equation}\label{rest:w:qp}
x.t^{i}=t^{i+n}, \quad \quad y.t^{i}=\left((q-1)g(i)+1\right)t^i,\quad \quad \text{for all $i\in\ZZ$.}
\end{equation}
%=q^i((q-1)g(0)+1)t^i

\begin{lemma}\label{L:qwa:rest:sigma}
Consider the restriction map $\sigma$ given in~\eqref{E:emb:sigma} to view the representations $\W{n}{}$ as representations of $\qp$.
\begin{enumerate}
\item[\textup{(a)}] Let  $n\in\ZZ_{>0}$ and assume $g:\mathbb{Z} \rightarrow \FF$ satisfies~\eqref{qwa:prop}. Then $\W{n}{}\simeq \bigoplus_{k=0}^{n-1}\W{1}{k}$ as representations of $\qp$, where $g_k(i)=g(k+in)$, for $0\leq k<n$ and $i\in\ZZ$.

\item[\textup{(b)}] Let $g, g':\mathbb{Z} \rightarrow \FF$ satisfy~\eqref{qwa:prop} with $n=1$. Then $\W{1}{}\simeq\mathsf{W}^{1}_{g'}$ as representations of $\qp$ if and only if $g(0)=g'(i)$ for some $i\in\ZZ$.

\item[\textup{(c)}] Assume $g:\mathbb{Z} \rightarrow \FF$ satisfies~\eqref{qwa:prop} with $n=1$. Then $\W{1}{}$ has trivial socle as a representation of $\qp$, i.e., it has no irreducible $\qp$-subrepresentations.
\end{enumerate}
\end{lemma}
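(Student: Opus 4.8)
The plan is to show directly that no single Laurent monomial, and more generally no nonzero element of $\FF[t^{\pm 1}]$, can generate an irreducible $\qp$-subrepresentation under the action \eqref{rest:w:qp}, where $x.t^i=t^{i+n}$ with $n=1$ and $y$ acts diagonally by $y.t^i=\big((q-1)g(i)+1\big)t^i$. Set $h(i)=(q-1)g(i)+1$ for brevity. Note that by \eqref{qwa:xy:com} (with $n=1$), $h(i+1)=(q-1)g(i+1)+1=(q-1)(qg(i)+1)+1=q\,h(i)$, so $h$ satisfies the multiplicative recursion $h(i+1)=q\,h(i)$, whence $h(i)=q^i h(0)$ for all $i$.

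First I would treat the case $h(0)\neq 0$, i.e.\ $g(0)\neq -\tfrac{1}{q-1}$, equivalently $g$ injective (as observed in the proof of Proposition \ref{P:qwa:w:irriso}(b), $g$ is constant precisely when $g(0)=-\tfrac1{q-1}$). In this subcase every $h(i)=q^i h(0)$ is nonzero and, since $q$ is not a root of unity, the values $h(i)$ are pairwise distinct. Suppose $\mathsf{S}$ is a nonzero irreducible $\qp$-subrepresentation and pick $0\neq p=\sum_{k=a}^{b}c_k t^k\in\mathsf{S}$ with $c_a,c_b\neq 0$. Applying $(y-h(b))$ annihilates the top term and lowers the "width" $b-a$; iterating, and using that $y$ acts semisimply with distinct eigenvalues $h(i)$, one obtains that each monomial $t^k$ with $c_k\neq 0$ lies in $\mathsf{S}$. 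Thus $\mathsf{S}$ contains some $t^\ell$; but then $\mathsf{S}\supseteq \FF[t^{\pm 1}]\,x.t^\ell$ and repeated application of $x$ (raising the exponent) together with the fact that $x$ is invertible on $\W{1}{}$ — more precisely, $\mathsf{S}$ being a $\qp$-submodule is only forced to be closed under $x$, so it contains $t^\ell,t^{\ell+1},t^{\ell+2},\dots$; this is an infinite-dimensional proper submodule, contradicting irreducibility (an irreducible submodule would have to be finite dimensional or exhaust the whole space, and the whole space $\W{1}{}$ is not irreducible as a $\qp$-module since $x$ alone never surjects downward). Hence there are no irreducible $\qp$-subrepresentations in this subcase.

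For the remaining subcase $h(0)=0$, i.e.\ $g\equiv -\tfrac{1}{q-1}$ constant, the action of $y$ on $\W{1}{}$ is identically zero, so $\qp$ acts through $\qp/(y)\cong\FF[x]$, and the module is $\FF[t^{\pm 1}]$ with $x$ acting as multiplication by $t$; since $t$ is not invertible as an operator of $\FF[x]$-type and $\FF[t^{\pm 1}]$ contains no finite-dimensional $\FF[x]$-stable subspace (eigenvectors of "multiplication by $t$" would force $t-\lambda$ invertible), there are again no irreducible subrepresentations. In both subcases we conclude $\mathrm{soc}_{\qp}\W{1}{}=0$.

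The step I expect to be the main obstacle is the width-reduction argument in the first subcase: one must argue carefully that, starting from an arbitrary $p$ and applying polynomials in $y$, \emph{every} monomial appearing in $p$ ends up inside $\mathsf{S}$, and then that closure under $x$ (only upward, since $\qp$ contains $x$ but not $x^{-1}$) already produces an infinite-dimensional proper submodule — so $\mathsf{S}$ cannot be both irreducible and nonzero. A clean way to package this is: any nonzero $\qp$-submodule of $\W{1}{}$ under \eqref{rest:w:qp} contains $t^\ell\FF[t]$ for some $\ell$, hence is infinite dimensional and properly contained in $\W{1}{}$, so it is not irreducible; therefore $\W{1}{}$ has no irreducible subrepresentation.
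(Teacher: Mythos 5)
You address only part (c); parts (a) and (b) are handled in the paper by direct reference to Lemma~\ref{L:qwa:dec} and to the proof of Proposition~\ref{P:qwa:w:irriso}(a), so concentrating on (c) is reasonable. Your overall strategy for (c) --- split into the cases $g$ non-constant, where $y$ acts diagonally on the $t^i$ with pairwise distinct nonzero eigenvalues $h(i)=q^{i}h(0)$ so that any nonzero submodule contains a monomial and hence some $t^{\ell}\FF[t]$, and $g$ constant, where $y$ acts as zero --- can be made to work, but the justification you give at the decisive step is a non sequitur. You assert that ``an irreducible submodule would have to be finite dimensional or exhaust the whole space'' and, in your packaged version, that a submodule which is ``infinite dimensional and properly contained in $\W{1}{}$'' is therefore not irreducible. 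Neither implication is valid: infinite-dimensional irreducible proper submodules of larger modules are commonplace (the representations $\V{m, n}{f}$ of this very paper are infinite-dimensional and irreducible), so infinite dimensionality plus proper containment proves nothing. The correct way to close your first subcase is: if an irreducible $\mathsf{S}$ contains $t^{\ell}$, then irreducibility forces $\mathsf{S}=\qp.t^{\ell}=t^{\ell}\FF[t]$, and $t^{\ell+1}\FF[t]$ is then a nonzero \emph{proper} $\qp$-submodule of $\mathsf{S}$ (every element of it has lowest $t$-exponent at least $\ell+1$, while $t^{\ell}\in\mathsf{S}$), a contradiction. With that repair, and with the constant case argued via the absence of finite-dimensional $x$-stable subspaces as you indicate (an irreducible module over the commutative quotient $\FF[x]$ is necessarily finite dimensional), your proof goes through.

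You should also compare with the paper's own argument, which is a one-liner needing no case distinction and no eigenvalue analysis: if $\mathsf{S}$ were an irreducible $\qp$-subrepresentation and $0\neq s\in\mathsf{S}$, then $x.s\neq 0$, so $\qp x.s$ is a nonzero submodule of $\mathsf{S}$ and irreducibility forces $\qp x.s=\mathsf{S}$; but every nonzero element of $\qp x.s$ has lowest $t$-exponent strictly greater than that of $s$ (the generators $x$ and $y$ respectively raise and preserve exponents under the $\sigma$-restriction), so $s\notin\qp x.s$, a contradiction. This is in effect the corrected form of your concluding step applied directly to $x.s$, bypassing the reduction to monomials entirely.
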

\begin{proof}
Part (a) follows directly from Lemma~\ref{L:qwa:dec} and part (b) follows from the proof of Proposition~\ref{P:qwa:w:irriso}(a), as the argument for the direct implication in Proposition~\ref{P:qwa:w:irriso}(a) used only the restriction of the action to the subalgebra generated by $XY$, which coincides with the subalgebra generated by $YX-XY$.

For part (c), suppose by way of contradiction that $\mathsf{S}$ is an irreducible $\qp$-subrepresentation of $\W{1}{}$. Let $0\neq s\in\mathsf{S}$. Then $x.s\neq 0$ and thus $\qp x.s=\mathsf{S}$, which is a contradiction as $s\notin \qp x.s$. 
\end{proof}

\begin{remark}
In the conditions of Lemma~\ref{L:qwa:rest:sigma}, it can be checked that $\W{1}{}$ has maximal $\qp$-subrepresentations if and only if $g$ is constant.
\end{remark}

Now we consider the restriction of $\W{n}{}$ to $\qp$ relative to the map $\tau$ defined in~\eqref{E:emb:tau}. In this case, the action of $\qp$ on $\W{n}{}$ is given by
\begin{equation}\label{rest:w:qp:tau}
x.t^{i}=\left((q-1)g(i)+1\right)t^i, \quad \quad y.t^{i}=g(i)t^{i-n},\quad \quad \text{for all $i\in\ZZ$.}
\end{equation}

\begin{lemma}\label{L:qwa:rest:tau}
Consider the restriction map $\tau$ given in~\eqref{E:emb:tau} to view the representations $\W{n}{}$ as representations of $\qp$.
\begin{enumerate}
\item[\textup{(a)}] Let  $n\in\ZZ_{>0}$ and assume $g:\mathbb{Z} \rightarrow \FF$ satisfies~\eqref{qwa:prop}. Then $\W{n}{}\simeq \bigoplus_{k=0}^{n-1}\W{1}{k}$ as representations of $\qp$, where $g_k(i)=g(k+in)$, for $0\leq k<n$ and $i\in\ZZ$.

\item[\textup{(b)}] Let $g, g':\mathbb{Z} \rightarrow \FF$ satisfy~\eqref{qwa:prop} with $n=1$. Then $\W{1}{}\simeq\mathsf{W}^{1}_{g'}$ as representations of $\qp$ if and only if $g(0)=g'(i)$ for some $i\in\ZZ$.

\item[\textup{(c)}] Assume $g:\mathbb{Z} \rightarrow \FF$ satisfies~\eqref{qwa:prop} with $n=1$. If $g(0)\notin \{ [i]_q \mid i\in\ZZ\}$ then $\W{1}{}$ has trivial socle as a representation of $\qp$, i.e., it has no irreducible $\qp$-subrepresentations. If $g(0)=[i]_q$ for some $i\in\ZZ$ then $\FF t^{-i}$ is the unique irreducible $\qp$-subrepresentation of $\W{1}{}$.
\end{enumerate}
\end{lemma}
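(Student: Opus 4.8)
The plan is to dispatch parts (a) and (b) by restricting results already proved at the level of $\mathbb{A}_1(q)$, and to concentrate the real work on part (c). For (a), the decomposition $\W{n}{}\simeq\bigoplus_{k=0}^{n-1}\W{1}{k}$ of Lemma~\ref{L:qwa:dec} is an isomorphism of $\mathbb{A}_1(q)$-representations, so it is in particular an isomorphism of $\qp$-representations after restricting along $\tau$; one only rechecks that each $g_k$ satisfies~\eqref{qwa:prop} with $n=1$, exactly as in Lemma~\ref{L:qwa:dec}. For (b), the ``if'' direction follows because the map $\phi(p)(t)=t^ip(t)$ built in the proof of Proposition~\ref{P:qwa:w:irriso}(a) is already an isomorphism of $\mathbb{A}_1(q)$-representations $\W{1}{}\to\mathsf{W}^{1}_{g'}$; for the ``only if'' direction I argue as in Lemma~\ref{L:qwa:rest:sigma}(b): by~\eqref{rest:w:qp:tau} the operator $x$ acts on $\W{1}{}$ diagonally with eigenvalue $(q-1)g(k)+1$ on $t^k$, so an isomorphism of $\qp$-representations forces equality of the eigenvalue sets, and since $q\neq 1$ this gives $\{g(k)\mid k\in\ZZ\}=\{g'(k)\mid k\in\ZZ\}$, whence $g(0)=g'(j)$ for some $j$.

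For part (c) the key observation is a monotonicity property of the $\tau$-action~\eqref{rest:w:qp:tau}: writing a nonzero $v=\sum_{k\le d}a_kt^k\in\W{1}{}$ with $a_d\neq 0$, the operator $x$ scales $v$ fixing all exponents, while $y.v=\sum_{k\le d}a_kg(k)t^{k-1}$ has all exponents $\le d-1$; thus no element of $\qp$ can raise the top exponent of a Laurent polynomial, and $y$ strictly lowers it whenever $y.v\neq 0$. Consequently, if $\mathsf{S}$ is an irreducible $\qp$-subrepresentation of $\W{1}{}$ and $0\neq s\in\mathsf{S}$ satisfies $y.s\neq 0$, then $\qp\cdot(y.s)=\mathsf{S}$ by irreducibility, which is absurd since every element of $\qp\cdot(y.s)$ has top exponent at most that of $y.s$, hence strictly below that of $s$. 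Therefore any irreducible $\qp$-subrepresentation is contained in $\ker(y|_{\W{1}{}})=\spann\{t^k\mid g(k)=0\}$.

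It then remains to analyse this kernel, using the elementary identity $g(0)=[i]_q\iff g(-i)=0$ (a one-line check from $g(k)=g(0)q^k+[k]_q$ together with $[i+k]_q=q^k[i]_q+[k]_q$). If $g(0)\notin\{[i]_q\mid i\in\ZZ\}$ then $g$ is nowhere zero, so the kernel is $0$ and $\W{1}{}$ has no irreducible $\qp$-subrepresentation. If $g(0)=[i]_q$ for some $i$, then $g(-i)=0$; since $q$ is not a root of unity $g$ is injective unless it is constant equal to $-\tfrac1{q-1}$, and $[i]_q=-\tfrac1{q-1}$ would force $q^i=0$, so $g$ is injective and $-i$ is its unique zero, giving $\ker(y|_{\W{1}{}})=\FF t^{-i}$. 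Since $x.t^{-i}=t^{-i}$ and $y.t^{-i}=0$, the line $\FF t^{-i}$ is a one-dimensional, hence irreducible, $\qp$-subrepresentation, and any irreducible $\qp$-subrepresentation, being a nonzero subspace of $\FF t^{-i}$, must equal it.

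The main obstacle is the uniqueness part of (c): excluding irreducible subrepresentations not contained in $\ker(y)$ rests on the top-exponent monotonicity of the $\tau$-action, and the fact that $\ker(y)$ has dimension at most one --- so that exactly one irreducible subrepresentation survives --- uses in an essential way that $q$ is not a root of unity, forcing $g$ to have at most one zero.
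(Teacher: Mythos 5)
Your proof is correct and follows essentially the same route as the paper: parts (a) and (b) by restricting the $\mathbb{A}_1(q)$-level isomorphisms and reading off the eigenvalues of $x=(q-1)XY+1$, and part (c) by the degree-lowering argument showing any irreducible $\qp$-subrepresentation must lie in $\ker(y)$, followed by the observation that $g$ has at most one zero since $q$ is not a root of unity. The only difference is that you spell out explicitly the "top exponent" contradiction and the injectivity of $g$, which the paper leaves as references to earlier proofs.
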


\begin{proof}
The proof is the same as the proof of Lemma~\ref{L:qwa:rest:sigma}, except for part (c). For this part, suppose that $\mathsf{S}$ is an irreducible $\qp$-subrepresentation of $\W{1}{}$. If there is $0\neq s\in\mathsf{S}$ such that $y.s\neq 0$, then we obtain a contradiction as in the proof of Lemma~\ref{L:qwa:rest:sigma}(c), showing that no such irreducible $\qp$-subrepresentation of $\W{1}{}$ exists. If $g(0)\notin \{ [i]_q \mid i\in\ZZ\}$ then $g(i)\neq 0$ for all $i\in\ZZ$, so $y.s\neq 0$ for all $s\neq 0$ and the first claim follows. Now suppose $g(0)=[i]_q$ for some $i\in\ZZ$. Then $g(k)=0\iff k=-i$. In particular, $x.t^{-i}=t^{-i}$ and $y.t^{-i}=0$, so that $\FF t^{-i}$ is an irreducible $\qp$-subrepresentation of $\W{1}{}$. If $\mathsf{S}$ is any irreducible $\qp$-subrepresentation of $\W{1}{}$, then the argument above implies that $y.s=0$ for all $s\in\mathsf{S}$, and this in turn implies that $\mathsf{S}\subseteq\FF t^{-i}$, which establishes the second claim in (c).
\end{proof}

\medskip

\begin{flushleft}
\textbf{Acknowledgments.} The authors wish to thank G.~Benkart and M.~Ondrus for helpful comments and suggestions on a preliminary version of this manuscript. They would also like to thank the anonymous referees for their valuable comments and for suggesting the approach in Subsection~\ref{SS:restriction}.
\end{flushleft}

%%%%%%%%%%%%%%%%%%%%%%%%%%%%%%%%%%%%%%%%%%%%%%%%%%%%%%%%%%%%%%%%%%
%%%%%%%%     Bibliography                           %%%%%%%%%%%%%%
%%%%%%%%%%%%%%%%%%%%%%%%%%%%%%%%%%%%%%%%%%%%%%%%%%%%%%%%%%%%%%%%%%
%\renewcommand{\g}{\mathfrak{g}}
%\nocite{*}
%\bibliographystyle{amsalpha}

%\bibliographystyle{amsplain}
%\bibliography{bibl}

\noindent \textsc{Samuel A. Lopes} \\
\textit{\small CMUP, Faculdade de Ci\^encias, Universidade do Porto, 
Rua do Campo Alegre 687\\ 
4169-007 Porto, Portugal}\\
\texttt{slopes@fc.up.pt}\\

\noindent \textsc{Jo\~ao N. P. Louren\c co} \\
\textit{\small Faculdade de Ci\^encias, Universidade do Porto, 
Rua do Campo Alegre 687\\ 
4169-007 Porto, Portugal}\\
\texttt{jnunolour@gmail.com}

\end{document}